\documentclass{article}[12pt,a4paper,fullpage]  

\usepackage{hyperref}
\usepackage{url}
\usepackage[utf8]{inputenc}
\usepackage{amsthm,amsmath,amssymb,amsfonts,latexsym}
\usepackage{enumitem}
\usepackage{standalone}
\usepackage{mathtools}
\usepackage{pdfpages}
\usepackage[english]{babel}
\usepackage{gensymb}
\usepackage{tabularx}
\usepackage{multirow}
\usepackage[left=3cm, right=3cm, top=3cm, bottom=4cm]{geometry}

\usepackage{amsfonts,latexsym,amsmath}
\usepackage{algorithmic}
\usepackage{abbr}

\numberwithin{equation}{section}

\theoremstyle{plain}
\newtheorem{theorem}{Theorem}[section]

\newtheorem{proposition}[theorem]{Proposition}
\newtheorem{algorithm}  [theorem]{Algorithm}

\theoremstyle{definition}
\newtheorem{definition}{Definition}[section]
\newtheorem{assumption}{Assumption}[section]

\theoremstyle{remark}
\newtheorem{remark}{Remark}[section]

\usepackage{graphicx,color}
\usepackage{amsfonts,latexsym,amsmath}
\usepackage{algorithm,algorithmic}

\newcommand{\bl}{\begin{list}{ \ }{
\leftmargin=.325in}}
\newtheorem{prop}{Proposition}[section]
\newenvironment{thm*}[2]{\par\bgroup{\scshape #1 \rm(#2).}
\it\ignorespaces}{\egroup}
\newtheorem{stopping rule}[theorem]{Stopping Rule}
\newcommand{\balg}{\begin{algorithm}}
\newcommand{\ealg}{\end{algorithm}}

\newcommand{\br}{{\bf r}}
\newcommand{\Om}{{\bf \Omega}}
\newcommand{\al}{{\boldsymbol{ \alpha}}}
\newcommand{\bPhi}{{\boldsymbol \Phi}}

\newcommand{\bphi}{{\boldsymbol \varphi}}
\newcommand{\bA}{{\bf A}}


\def\data#1{\varphi_{\alpha_{#1}}}
\def\layer#1#2{\Phi^{(#1)}(\br+h_{#1}\al_{#2})}
\def\layerj#1{\Phi^{(#1)}}

\def\Ao#1{{\bf A}_{\al_{#1}}}

\def\tlayerjrec#1{{\tilde\Phi}^{(#1)}}

\def\spr#1{\langle\, #1 \,\rangle}

\def\bOmgl#1#2{\bar\Omega^{#1,#2}}
\def\bOmgT{\bar\Omega_T^g}

\begin{document}
\title{On some analytic properties of the atmospheric tomography operator: Non-Uniqueness and reconstructability issues}
 \author{
 Ronny Ramlau\thanks{Industrial Mathematics Institute, Johannes Kepler
   University Linz, and Johann Radon Institute for Computational and Applied
   Mathematics, A-4040 Linz, Austria (ronny.ramlau@jku.at).} 
   \and Bernadett Stadler\thanks{Johann Radon Institute for Computational and Applied
   Mathematics, A-4040 Linz, Austria (bernadett.stadler@ricam.oeaw.ac.at).}}
\maketitle

\begin{abstract}
In this paper, we consider the atmospheric tomography operator, which describes the effect of turbulent atmospheric layers on incoming planar wavefronts. Given wavefronts from different guide stars, measured at a telescope, the inverse problem consists in the reconstruction of the turbulence above the telescope. We show that the available data is not sufficient to reconstruct the atmosphere uniquely. Additionally, we show that classical regularization methods as Tikhonov regularization or Landweber iteration will always fail to reconstruct a physically meaningful turbulence distribution.
\end{abstract}

\noindent \textbf{Keywords:} Atmospheric tomography operator, ill-posed and ill-conditioned problems\\

\noindent \textbf{AMS:} 78A10, 47A52

\section{Introduction} \label{intro}
Images from earth bound telescopes are obtained by looking at the sky through the atmosphere. Inhomogenious distributed air patches with different temperature have different refractive indices, which causes the wavefronts of the incoming light from astronomical objects to travel with locally varying speed. This effect reduces the image quality of the obtained astronomical images and worsens for larger telescopes. As a counter-measure, modern giant telescopes are equipped with {\it Adaptive Optics} systems, which use a combination of wavefront sensors and deformable mirrors for the correction of the incoming
disturbed wavefronts \cite{Roddier,RoWe96}. Based on wavefront sensor measurements of distant guide stars, appropriate shapes of deformable mirrors are obtained such that the blurring caused by the turbulence of the atmosphere is removed after reflection, see Figure \ref{defmirr} (left).

\bfig[ht]
\bmp{0.6\textwidth}
\includegraphics[width=0.9\textwidth]{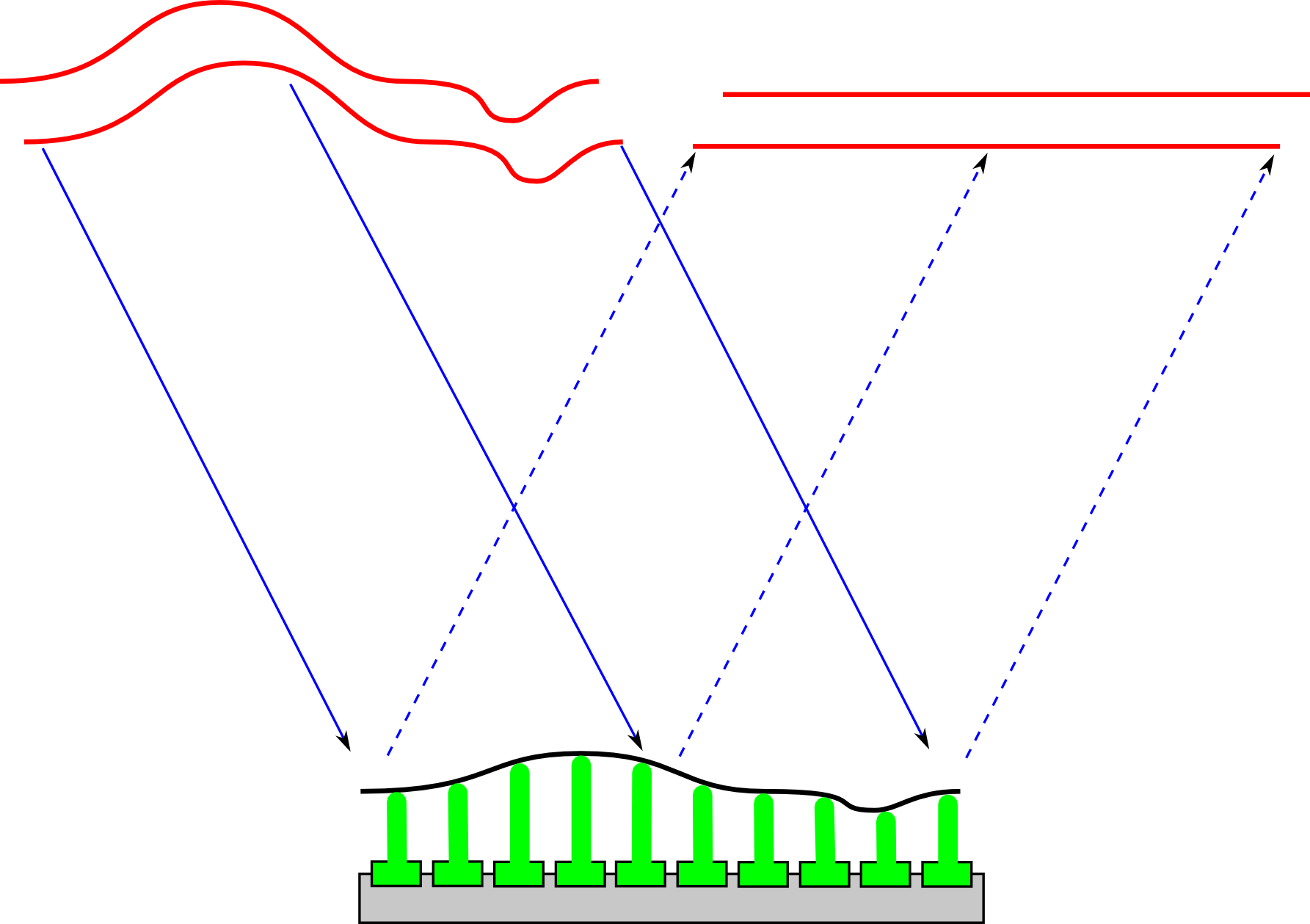}
\emp\bmp{0.4\textwidth}
\includegraphics[width=0.9\textwidth]{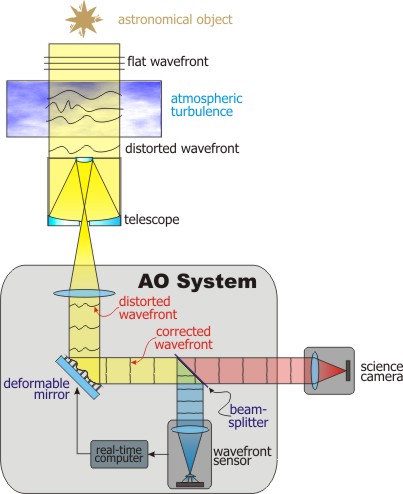}
\emp
\caption{\label{defmirr}Left: correction of an incoming wavefront by a
 deformable mirror (image from \cite{Au17}); Right: sketch of a SCAO
 system (image from \cite{Dhil16}).}
\efig

{\em Single Conjugate Adaptive Optics} (SCAO) is the simplest Adaptive Optics system: The light of a single bright {\it Natural Guide Star (NGS)} near to the astronomical object of interest is used to compute the optimal correcting shape of a single deformable mirror. The general principle of an SCAO system is given in Figure \ref{defmirr} (right).

More complex AO systems use the incoming light of several guide stars, measured
by several wavefront sensors, and multiple deformable mirrors to achieve either
a high image quality over a large field of view ({\em Multi Conjugate Adaptive
Optics}, MCAO) or to achieve high image quality in prescribed directions
({\em Laser Tomography Adaptive Optics}, LTAO, and {\em Multi Object Adaptive
Optics}, MOAO). As there are not enough bright guide stars available at the night sky, artificially created {\it Laser guide stars (LGS)} are commonly used.
The computation of the mirror shapes for MCAO, LTAO and MOAO is based on the
reconstruction of the turbulence profile above the telescope from the incoming
wavefront measurements, which is obtained by solving the Atmospheric Tomography
problem. As the  separation of the guide star is low ($1$ arcmin for
MCAO, $3.5$ arcmin for MOAO) we are facing a limited angle tomography problem
which is known to be severely ill posed \cite{Davison83,Nat86}. Moreover, only few data are available, as only a small number of guide stars can be used (e.g., $6$ laser guide stars for the {\it Extremely Large Telescope (ELT)} of the {\it European Southern Observatory (ESO)}. Thus it is hopeless to attempt to reconstruct the full turbulent volume above the telescope. Instead, we use the
common assumption that the atmosphere contains only a limited number of turbulent layers, which are infinitely thin and located at a prescribed height, see Figure \ref{MCAOSys} for a related MCAO setup.

\begin{figure}[ht!]
\cl{\includegraphics[width=0.7\textwidth]{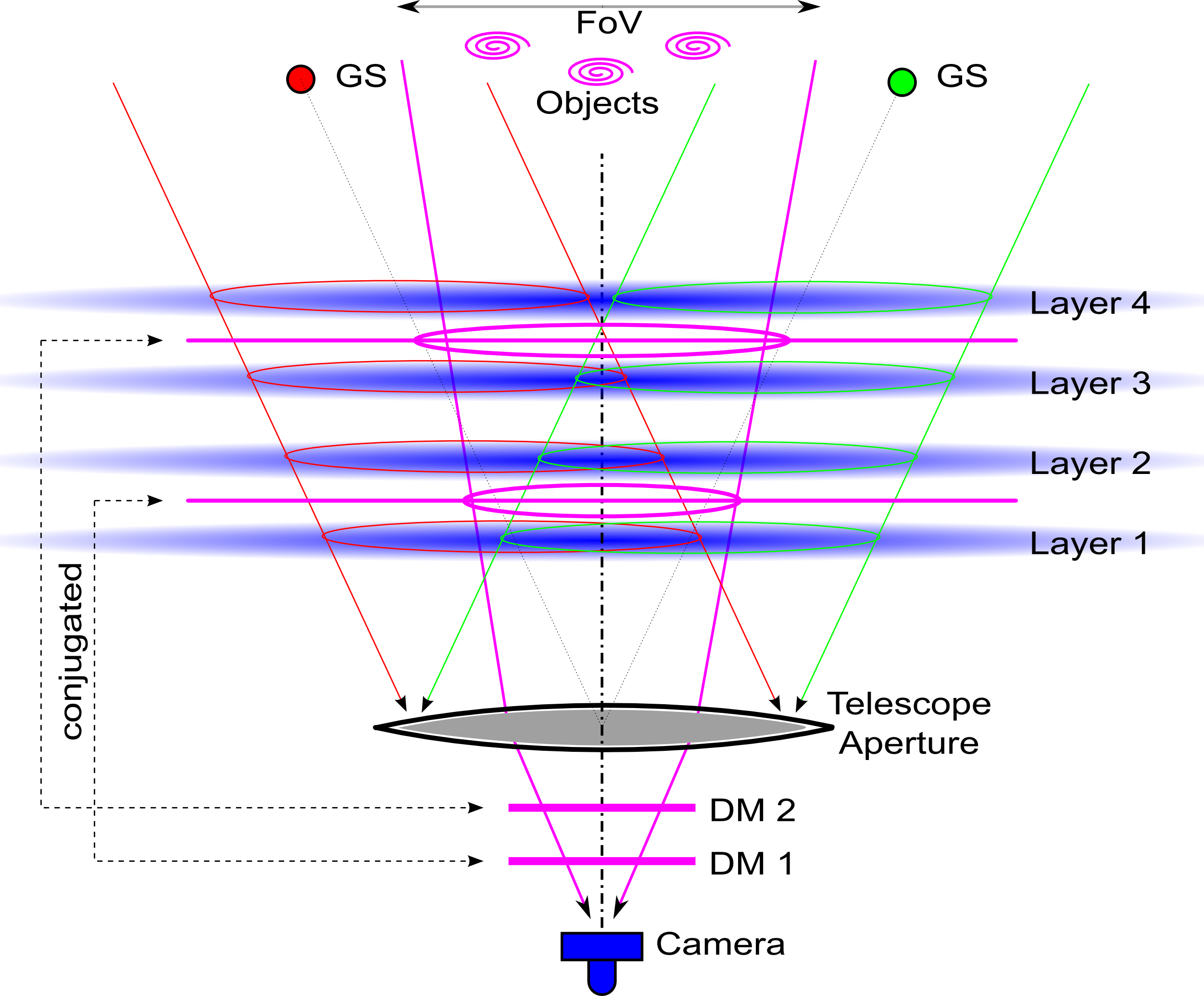}}
\caption{\label{MCAOSys}Sketch of an MCAO system assuming a four layer
 atmosphere and using two deformable mirrors for correction (image from
 \cite{Au17}).}
\end{figure}

Atmospheric Tomography has been in the focus of astronomers and AO engineers for years. Its mathematical description can be derived from the standard Radon
transform by taking into account the geometry of the telescope, the limited
angle property and the layered structure of the atmosphere \cite{ElVo09,Fusco}. Most of the research has been dedicated to the development of
numerical reconstruction approaches: e.g., Fusco et.al. \cite{Fusco} propose a minimum mean square
error method, while in \cite{Gavel2004} a back-projection algorithm has been used. In particular popular are conjugate gradient type iterative reconstructors with suitable preconditioning \cite{ElGiVo03,GiEl08,GiVoEl02,YaVoEl06,VoYa06a}. The Fractal Iterative Method (FrIM) \cite{Tallon_et_al_10,ThiTa10,TaBeTaLeThClMa11} and the Finite Element Wavelet Hybrid Algorithm (FEWHA) \cite{Yu14,HeYu13,YuHeRa13,YuHeRa13b} both compute maximum a-posterior estimators for the solution of the atmospheric tomography problem, whereas a Kaczmarz iteration is used in \cite{RaRo12,RoRa13}. In infinite dimensional function spaces, the atmospheric tomography operator has been considered in \cite{RaRo12,EslPechRam} with a focus on the derivation of the adjoints of the operator with respect to different underlying function spaces. In a standard $L_2$ setting, singular value type decompositions \cite{NeubRamlau_2017} as well as frame decompositions \cite{RamlauHub_2020,Ramlau_2021_02}   have been derived. The singular value type decomposition already suggest the existence of a nonzero nullspace of the atmospheric tomography operator, as some of the "singular values" are numerically zero, but a throughout analysis is still missing. Additionally, the used reconstruction methods all seem to fail to reconstruct the original atmosphere, which is another hint towards a nontrivial nullspace.\\

The aim of our paper is therefore to gain some insight into the structure of the Atmospheric Tomography problem. In particular, we will show that a proper reconstruction in {\it non-overlapping} areas is impossible.
Additionally, we will investigate the structure of the reconstructions obtained by standard regularization methods which use the adjoint of the atmospheric tomography operator. We will show that those methods almost always fail to fully reconstruct a realistic atmosphere. The analytical results are supported by numerical simulations.\\

The paper is organized as follows: in Section \ref{setting} we define the geometrical setup
and define the related atmospheric tomography operator and its adjoint. In Section \ref{nonuniqueness} we show that the atmospheric tomography operator is not uniquely invertible. Section \ref{reconstructability} analyses the structure of solutions obtained by classical reconstruction methods like Tikhonov regularization, Landweber iteration or the Kaczmarz method. Finally we verify our results numerically in Section \ref{numerics}.

\section{Setting}\label{setting}

\bfig[ht]
\cl{\includegraphics[width=0.6\textwidth]{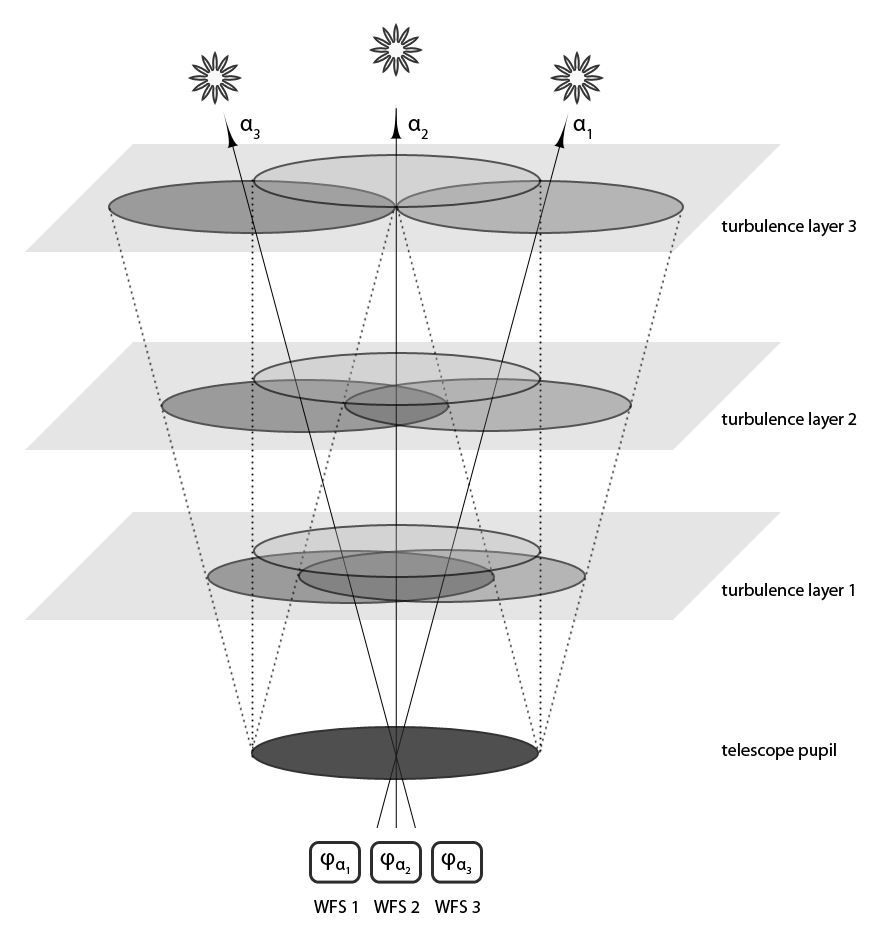}}
\caption{\label{atmfig}Layered atmosphere with three viewing directions. At each
layer, turbulence can only be reconstructed in the intersections of the shifted pupil areas.}
\efig

The Atmospheric Tomography operator describes a limited angle tomography problem based on a layered structure of the atmosphere. Additionally, only a finite number of directions of views $\ag$, $g=1,\dots ,G$, associated to the location of $G$ guide stars (natural or laser) and a wavefront sensor measuring the incoming wavefronts $\data{g}$ from the guide star, are available. We assume that the atmosphere contains $L$ layers, where each layer is a plane at height $h_l$ parallel to the telescope aperture $\OT$ (see Figure \ref{atmfig}).

A standard setting for the ELT that is also used in our numerical examples is the ESO standard atmosphere with $L=9$ layers sampled by $G=6$ viewing directions. This setup is, e.g., used for ELT simulations of the instrument {\it Multiconjugate adaptive Optics Relay For ELT Observations} (MORFEO). In the following, we will denote the turbulence at layer $l$ by $\layerj{l}$ and the incoming wavefronts in direction $\ag$ by $\data{g}$. Assuming geometric optics, the connection between the turbulent layers $\bPhi = (\layerj{1}, \dots, \layerj{L})^T$ and the incoming wavefronts $\bphi=(\data{g})_{g=1,\dots,G}$
is given by the atmospheric tomography operator $\bA$, see \cite{Fusco}:

\begin{eqnarray}
  \bA&:&\bigotimes_{l=1}^L L_2(\Ol) \to L_2(\OT)^G\\
  \label{AtmTomo}
  \bA\bPhi &:=&  \bphi = \left (\begin{array}{l}\data{1}\\
  \vdots\\\data{G}
  \end{array}\right )\\
\data{g}(\br)&=&\Ao{g}\bPhi := \sum_{l=1}^L \layer{l}{g}~, \br \in \OT~, g=1,\dots , G.
\end{eqnarray}

The sets $\Ol$ are defined as
\begin{eqnarray}\label{Omj}
\Ol&=&\bigcup_{g=1}^G \OT (h_l\al_g),\\
\label{OmT}
\OT (h_l\al_g)&:=&\{\br\in \mathbb R^2:\br-h_l\al_g\in \OT \} ~.
\end{eqnarray}

The set $\Ol$ denotes the area on the layer $l$ that contributes to the measurements of $\varphi$. This area grows with increasing hight, see Figure \ref{atmfig}.
Please note that this definition only covers the case where the incoming light
originates from NGS. For LGS, in particular the cone effect has to be considered,
which leads to a slightly different definition of the operator, see, e.g.,
\cite{RoRa13}. Nevertheless, all of the obtained results in the paper can be easily generalized from the NGS to the LGS or mixed case.

For our analysis we further need to specify the underlying function spaces: the cartesian product spaces are equipped with the usual inner products, respectively, i.e.,
\[ \spr{\phi,\psi}:=\suml\frac{1}{\gamma_l}\spr{\phi_l,\psi_l}_{L_2(\Ol)}\qquad
 \spr{\varphi,\psi}:=\sumg\spr{\varphi_g,\psi_g}_{L_2(\OT)}\,. \]

The weights $\gamma_l$ can be used to include the turbulence strength in layer $l$ into the reconstruction. In order to reconstruct the turbulence profile $\bPhi$, we need to solve for given $\data{1}, \dots , \data{g}, \dots , \data{G}$ equation \eqref{AtmTomo}, i.e., the system of equations

\begin{equation}\label{KaczmarzEq}
  \Ao{g}\bPhi = \data{g}, \hspace{1cm}g=1,\dots , G.
\end{equation}

Classical reconstruction methods for linear operator equations make frequently use of the adjoint of the operator, which depends on the operator itself as well as on the used inner product. We have the following result:

\begin{prop}
  The adjoint of the atmospheric tomography operator \\ $\bA:L_2(\Ol)^L \to L_2(\OT)^G$ is given by
  \begin{equation}\label{Astar}
    \bA^\ast \bphi = \sum_{g=1}^{G} \Ao{g}^\ast \data{g}
  \end{equation}
  with
  \begin{eqnarray}
  \label{Aad}
  \left (\Ao{g}^\ast \data{g}\right )(\br) &=& \left(
\begin{array}{c}
   \gamma_1 \cdot  \data{g}\left(\br - h_1\ag\right) \chi_{\OT(h_1\ag )}\left(\br\right )\\
   \vdots\\
    \gamma_l \cdot  \data{g}\left(\br - h_l\ag\right) \chi_{\OT(h_l\ag )}\left(\br\right )\\
    \vdots\\
     \gamma_L \cdot  \data{g}\left(\br - h_L\ag\right) \chi_{\OT(h_L\ag )}\left(\br\right )
\end{array}\right)
\end{eqnarray}
\end{prop}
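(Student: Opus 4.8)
The plan is to verify the formula directly from the defining relation of the adjoint: since $\bA$ is a finite sum of translations composed with restrictions it is bounded on $L_2$, so $\bA^\ast$ exists and is the unique operator satisfying $\spr{\bA\bPhi,\bphi}=\spr{\bPhi,\bA^\ast\bphi}$ for all $\bPhi$ in the domain and all $\bphi$ in the range. First I would expand the left-hand side with the range inner product $\sumg\spr{\cdot,\cdot}_{L_2(\OT)}$, inserting the pointwise definition of each component $\data{g}=\Ao{g}\bPhi=\sum_{l=1}^L\layerj{l}(\br+h_l\ag)$. This turns $\spr{\bA\bPhi,\bphi}$ into a double finite sum over $g$ and $l$ of integrals over $\OT$ of $\layerj{l}(\br+h_l\ag)\,\data{g}(\br)$.

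The decisive step is a change of variables in each of these spatial integrals. Substituting $\mathbf{s}=\br+h_l\ag$ (a pure translation, hence measure preserving) replaces the integrand by $\layerj{l}(\mathbf{s})\,\data{g}(\mathbf{s}-h_l\ag)$, while the integration region transforms from $\OT$ into $\{\mathbf{s}:\mathbf{s}-h_l\ag\in\OT\}=\OT(h_l\ag)$ by definition \eqref{OmT}. Since $\OT(h_l\ag)\subseteq\Ol$ by \eqref{Omj}, I can extend each integral to the full layer domain $\Ol$ at the cost of inserting the indicator $\chi_{\OT(h_l\ag)}$. After interchanging the two finite sums, $\spr{\bA\bPhi,\bphi}$ takes the form $\sum_{l=1}^L\int_{\Ol}\layerj{l}(\mathbf{s})\left(\sumg\data{g}(\mathbf{s}-h_l\ag)\,\chi_{\OT(h_l\ag)}(\mathbf{s})\right)d\mathbf{s}$.

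To read off $\bA^\ast$ I would compare this expression with the domain inner product $\spr{\bPhi,\bA^\ast\bphi}=\suml\frac{1}{\gamma_l}\spr{\layerj{l},(\bA^\ast\bphi)_l}_{L_2(\Ol)}$. Matching the $l$-th summand forces the $l$-th component of $\bA^\ast\bphi$ to equal $\gamma_l\sumg\data{g}(\cdot-h_l\ag)\,\chi_{\OT(h_l\ag)}(\cdot)$, where the prefactor $\gamma_l$ is precisely what cancels the weight $1/\gamma_l$ of the domain inner product; splitting the sum over $g$ then gives $\bA^\ast\bphi=\sumg\Ao{g}^\ast\data{g}$ with the claimed component formula \eqref{Aad}. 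I do not expect a genuine obstacle here, only careful bookkeeping: the point requiring attention is that the shift of the integration domain together with the inclusion $\OT(h_l\ag)\subseteq\Ol$ justifies passing to $L_2(\Ol)$ and that the weights $\gamma_l$ are attached to the correct side, and well-definedness of $\bA^\ast\bphi$ in $L_2(\Ol)^L$ follows since each translated-and-truncated $\data{g}$ is square integrable on $\Ol$.
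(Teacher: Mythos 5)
Your proof is correct: the translation substitution $\mathbf{s}=\br+h_l\ag$ turns each term of $\spr{\bA\bPhi,\bphi}$ into an integral over $\OT(h_l\ag)\subseteq\Ol$, and the factor $\gamma_l$ you attach to the $l$-th component is exactly what cancels the weight $1/\gamma_l$ in the domain inner product, so matching summands yields \eqref{Astar} and \eqref{Aad}. Note that the paper itself gives no proof here but defers to \cite{RaRo12,SaRa15}; your direct verification via the defining relation of the adjoint is precisely the standard argument used in those references, so your approach is essentially the intended one and complete.
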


A proof can be found in \cite{RaRo12,SaRa15}.  Additionally, we have

\begin{proposition}
The operator $\Ao{g}^\ast$ is inverted on its range by $\Ao{g}$, i.e.,
\begin{equation}
\label{ident}
\left (\Ao{g}\Ao{g}^\ast \psi\right )(\br) = \psi (\br), ~~\psi\in L_2 (\OT)~.
\end{equation}
\end{proposition}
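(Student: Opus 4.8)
The plan is to verify the identity \eqref{ident} by direct substitution of the two explicit formulas, exploiting the fact that the forward map $\Ao{g}$ shifts each layer by $+h_l\ag$ while the adjoint $\Ao{g}^\ast$ shifts it by $-h_l\ag$, so that the two translations cancel layer by layer. No functional-analytic machinery beyond \eqref{AtmTomo} and \eqref{Aad} should be required; the whole computation is a change-of-variable bookkeeping argument.

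First I would set $\bPhi := \Ao{g}^\ast\psi$ and read off its $l$-th component from \eqref{Aad}, namely $\layerj{l}(\br) = \gamma_l\,\psi(\br - h_l\ag)\,\chi_{\OT(h_l\ag)}(\br)$ for $\br\in\Ol$. Substituting this tuple into the definition of $\Ao{g}$ from \eqref{AtmTomo} gives, for $\br\in\OT$,
\begin{equation*}
\left(\Ao{g}\Ao{g}^\ast\psi\right)(\br) = \sum_{l=1}^L \layerj{l}(\br + h_l\ag) = \sum_{l=1}^L \gamma_l\,\psi\big((\br+h_l\ag)-h_l\ag\big)\,\chi_{\OT(h_l\ag)}(\br + h_l\ag).
\end{equation*}
The two simplifications that drive the proof are then: (i) the translation cancellation $\psi\big((\br+h_l\ag)-h_l\ag\big)=\psi(\br)$, which eliminates the shift; and (ii) the evaluation of the indicator, $\chi_{\OT(h_l\ag)}(\br+h_l\ag)=1$, which holds for \emph{every} $\br\in\OT$ because, by \eqref{OmT}, the condition $\br+h_l\ag\in\OT(h_l\ag)$ is equivalent to $\br\in\OT$. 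Applying (i) and (ii) collapses each summand to $\gamma_l\,\psi(\br)$.

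Carrying this out leaves $\left(\Ao{g}\Ao{g}^\ast\psi\right)(\br) = \left(\sum_{l=1}^L\gamma_l\right)\psi(\br)$, so the only point that genuinely needs care — and the main obstacle in an otherwise mechanical argument — is the bookkeeping of the weights $\gamma_l$, which enter through the adjoint because of the weighted inner product on $\bigotimes_{l=1}^L L_2(\Ol)$. The identity \eqref{ident} holds precisely when this prefactor equals one, i.e. under the standard normalization $\sum_{l=1}^L\gamma_l = 1$ of the relative turbulence strengths; I would make this normalization explicit rather than leave it implicit. A secondary subtlety is the domain bookkeeping between the layer sets $\Ol$ and the aperture $\OT$: one must confirm that $\br+h_l\ag$ lies in $\Ol\supseteq\OT(h_l\ag)$, so that $\layerj{l}$ is being evaluated where it is actually defined, which again follows directly from \eqref{Omj}--\eqref{OmT}.
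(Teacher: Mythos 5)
Your proof is correct and follows essentially the same direct change-of-variables computation as the proof the paper cites in \cite{RaRo12}: substituting \eqref{Aad} into the forward operator, the translations cancel and the indicator $\chi_{\OT(h_l\ag)}(\br+h_l\ag)$ equals one for every $\br\in\OT$, leaving $\bigl(\sum_{l=1}^L\gamma_l\bigr)\psi(\br)$. Your explicit flagging of the normalization $\sum_{l=1}^L\gamma_l=1$ is a genuine point the paper leaves implicit: the $\gamma_l$ are relative turbulence strengths (cf.\ Table \ref{tab:MorfeoParams}, where $0.75+0.15+0.1=1$), and without this normalization \eqref{ident} would hold only up to the constant factor $\sum_{l=1}^L\gamma_l$.
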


See \cite{RaRo12} for a proof.

\section{Non-Uniqueness of the Atmospheric Tomography Operator}\label{nonuniqueness}
In this section we show that the tomography equation \eqref{AtmTomo} does usually not admits a unique equation. To achieve this results, we need some geometrical results based on the atmospheric tomography setup.
\subsection{Some geometrical considerations}
For what follows we need
\begin{assumption}\label{GeomAssump}
For the guide star directions $\alpha_g , g=1,\dots , G$ and the layer heights $h_l, l=1,\dots , L$  holds
\begin{eqnarray}
\alpha_j &\neq& \alpha_g \mbox{ for } j\neq g\\
0&<& h_1<\dots < h_l < \dots < h_L.
\end{eqnarray}
Specifically, we assume that
\begin{equation}
	\alpha_g=\left (\cos\theta_g, \sin \theta_g\right )
\end{equation}
with $\theta_g\in [0, 2\pi )$, and $0\le\theta_g < \theta_{g+1}$ for $g=1,\dots , G-1$, i.e., the directions are ordered clockwise.
\end{assumption}

Moreover, it will be convenient to define the overlap function $\omega_l$:\\

\begin{definition}
  For a layer $l$, $l=1,\dots , L$, we define the overlap function $\omega_l$ as
  \begin{equation}
    \omega_l (\br) := \sum_{g=1}^G \chi_{\OT(h_l\alpha_g)}(\br).
  \end{equation}
\end{definition}

The overlap function counts to how many of the areas $\OT(h_l\alpha_{g})$ a point $\br$ belongs.
 We immediately observe\\

 \begin{remark}
   For the overlap function it holds that
   \begin{enumerate}
     \item[(i)] $\omega_l(\br) \in \{0,1,\dots , G\}$, i.e., it only admits discrete values.
     \item[(ii)] $\omega_l(\br) = 0 \Longleftrightarrow \br\in \left (\bigcup_{g=1}^G\OT(h_l\alpha_g)\right )^\perp$
     \item[(iii)] $\omega_l(\br)=1  \Longleftrightarrow \exists \tilde{g}\mbox{ s.t. } \br\in \OT(h_l\alpha_{\tilde{g}})\setminus \bigcup_{\stackrel{g=1}{g\neq\tilde g}}^G\OT (h_l\alpha_g)$.
     \item[(iv)] $\omega_l(\br)=G  \Longleftrightarrow \br\in \bigcap_{g=1}^G\OT (h_l\alpha_g)\neq \emptyset$.
   \end{enumerate}
 \end{remark}
~\\
As we can observe from Figure \ref{atmfig}, the overlap of the sets $\OT (h_l\alpha_g)$ decreases for higher altitudes $h_l$. Accordingly, at a certain altitude $h_{disj}(\alpha)$ there is no overlap anymore, i.e., $\omega_l (\br)\in \{0,1\}$ for all $h_l\ge h_{disj}$.\\

\begin{proposition}\label{Single_ball}
  Under Assumption \ref{GeomAssump}
  and $G>1$ there exists $\br_l\in\Omega(h_l\alpha_g)$ and $\rho_l>0$ s.t. $\omega_l(\br)=1$ for all
  $\br\in B_{\rho_l}(\br_l)$.\\
\end{proposition}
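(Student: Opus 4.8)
The plan is to produce a point that lies in exactly one of the shifted apertures by separating one aperture from all the others with a single linear functional. Fix any index $g_0$ and consider the Euclidean projection $\ell(\br) := \langle \br, \alpha_{g_0}\rangle$ onto the direction $\alpha_{g_0}$. The geometric intuition is that among the shifted copies $\OT(h_l\alpha_g)$, the one indexed by $g_0$ reaches farthest in the direction $\alpha_{g_0}$, because its shift $h_l\alpha_{g_0}$ points exactly that way; hence its leading edge is not covered by any other copy. I will turn this into a quantitative statement via the support function of $\OT$.

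First I would compute, for each $g$, the supremum $M_g := \sup_{\br\in\OT(h_l\alpha_g)}\ell(\br)$. Writing $\br = s + h_l\alpha_g$ with $s\in\OT$ gives $M_g = h_l\langle\alpha_g,\alpha_{g_0}\rangle + M_0$, where $M_0 := \sup_{s\in\OT}\langle s,\alpha_{g_0}\rangle$ is finite since the aperture $\OT$ is bounded. Because $\langle\alpha_g,\alpha_{g_0}\rangle = \cos(\theta_g-\theta_{g_0})$ and Assumption \ref{GeomAssump} forces $\alpha_g\neq\alpha_{g_0}$ for $g\neq g_0$, we have $\cos(\theta_g-\theta_{g_0})<1$, so $M_{g_0}-M_g = h_l\bigl(1-\cos(\theta_g-\theta_{g_0})\bigr) > 0$ for every $g\neq g_0$; this is exactly where $h_l>0$ and the distinctness of the directions enter. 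Since $G>1$ is finite, I may set $M^\ast := \max_{g\neq g_0} M_g$, and the strict inequalities above give $M^\ast < M_{g_0}$.

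Next I would pick a point $\br_l\in\OT(h_l\alpha_{g_0})$ with $\ell(\br_l) > M^\ast$, which exists because $\sup_{\OT(h_l\alpha_{g_0})}\ell = M_{g_0} > M^\ast$. By construction $\ell(\br_l) > M_g$ for all $g\neq g_0$, so $\br_l\notin\OT(h_l\alpha_g)$ for every such $g$, while $\br_l\in\OT(h_l\alpha_{g_0})$; thus $\omega_l(\br_l)=1$. To upgrade this single point to a ball, I would note that both defining conditions are open: the set $\OT(h_l\alpha_{g_0})$ is open (the pupil is an open bounded set), and $\{\br:\ell(\br)>M^\ast\}$ is an open half-plane. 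Their intersection is an open set containing $\br_l$, so some $\rho_l>0$ gives $B_{\rho_l}(\br_l)$ inside it; on this ball $\chi_{\OT(h_l\alpha_{g_0})}\equiv 1$ and $\chi_{\OT(h_l\alpha_g)}\equiv 0$ for $g\neq g_0$, i.e.\ $\omega_l\equiv 1$.

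The only delicate step is passing from a lone separating point to a full ball, which is why I stress the openness of $\OT$ together with the strictness $M_{g_0}>M^\ast$; if one prefers to model $\OT$ as closed, the same reasoning applies after choosing $\br_l$ in its interior. I expect the separation estimate itself to be entirely routine, and I would emphasize that convexity of $\OT$ is never used, so the argument covers an annular (centrally obstructed) pupil as well. The whole conceptual content is that distinct viewing directions yield $\cos(\theta_g-\theta_{g_0})<1$, pushing the $g_0$-aperture strictly ahead of all competitors in its own direction.
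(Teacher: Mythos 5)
Your proof is correct, but it takes a genuinely different route from the paper. The paper does not give a full proof at all: it only sketches a concrete analytic-geometry construction, taking three neighboring shifted apertures, computing the intersection points of the two outer circles, drawing the line through the center of $\Omega_T$ and that intersection point, and placing $\mathbf{r}_l$ midway between the outer-circle intersection and the line's intersection with the middle circle (cf.\ Figure \ref{fig:Ball}). That construction is explicit about \emph{where} the ball sits, but it is tied to the circular pupil geometry and to the clockwise ordering of the directions, and the verification that the chosen point avoids \emph{all} other shifted copies is left to the reader. Your support-function argument replaces this with a clean separation: the translate $\Omega_T(h_l\alpha_{g_0})$ leads all competitors in its own direction by the quantitative margin $M_{g_0}-M_g=h_l\bigl(1-\cos(\theta_g-\theta_{g_0})\bigr)>0$, which uses exactly the hypotheses $h_l>0$ and $\alpha_g\neq\alpha_{g_0}$ from Assumption \ref{GeomAssump}, and then openness of the half-plane $\{\ell>M^\ast\}$ upgrades the separating point to a ball. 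What your approach buys: a complete proof rather than a sketch; no use of convexity or of the ordering of the $\theta_g$; validity for an arbitrary bounded pupil (including an annular, centrally obstructed one, which is the realistic ELT case even though the paper's simulations assume no obstruction); and validity for \emph{every} choice of $g_0$, which is what the subsequent construction of the sets $\bar\Omega^{g,l}$ actually requires. One small point of care: the paper implicitly models $\Omega_T$ as the \emph{closed} disk of radius $T$ (its proof of Proposition \ref{Prop_omega=1} uses $\|\mathbf{r}\|\le T$ and characterizes non-membership by $\|\tilde{\mathbf{r}}-h_l\alpha_{\tilde g}\|>T$), so the openness step must indeed pick $\mathbf{r}_l$ in the interior, exactly as you anticipated in your closing remark; with $M_g$ then an attained maximum, $\ell(\mathbf{r})>M^\ast$ still excludes membership in the closed sets, so no gap remains.
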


The proof of the above Proposition can be done using standard arguments, e.g., from analytical geometry. For an illustration please see Figure \ref{fig:Ball}. The graphics also gives the idea for the proof: One takes three neighboring areas $\OT (h_l\alpha_g)$, and computes the intersection points of the circles of the two outer areas. Then we draw a line through the center of $\Omega_T$ and the intersection point of the two outer circles. The point $\br_l$ is then placed in the middle of the line between the intersection of the two outer circles and the intersection of the line with the middle circle.\\

\begin{proposition}\label{Prop_omega=1}
  Let $\br\in\OT$ and
  \begin{eqnarray}
    \tilde \br &=& \br+h_l\al_g\\
    \tilde{\tilde \br} &=& \br+ h_{l+1}\al_g.
  \end{eqnarray}
  If $\omega_l (\tilde \br ) = 1$, then also $\omega_{l+1} (\tilde{\tilde \br}) = 1$.\\
\end{proposition}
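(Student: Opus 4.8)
The plan is to reduce the statement to a one-dimensional convexity argument along the segment connecting the two shifted pupils, using only that the aperture $\OT$ is convex (for a circular aperture $\OT=B_R(0)$ this holds trivially).

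First I would record the two trivial memberships that fix the lower bounds. Since $\tilde\br-h_l\al_g=\br\in\OT$ and $\tilde{\tilde\br}-h_{l+1}\al_g=\br\in\OT$, we have $\tilde\br\in\OT(h_l\al_g)$ and $\tilde{\tilde\br}\in\OT(h_{l+1}\al_g)$, so $\omega_l(\tilde\br)\ge 1$ and $\omega_{l+1}(\tilde{\tilde\br})\ge 1$. Hence it suffices to show that $\tilde{\tilde\br}$ lies in no other shifted pupil. To this end I would rewrite the relevant membership conditions so as to isolate the dependence on the height: for $j\neq g$,
$$\tilde\br\in\OT(h_l\al_j)\iff \br+h_l(\al_g-\al_j)\in\OT,\qquad \tilde{\tilde\br}\in\OT(h_{l+1}\al_j)\iff \br+h_{l+1}(\al_g-\al_j)\in\OT.$$
Because $\chi_{\OT(h_l\al_g)}(\tilde\br)=1$, the hypothesis $\omega_l(\tilde\br)=1$ is equivalent to saying that $\br+h_l(\al_g-\al_j)\notin\OT$ for every $j\neq g$; similarly, the desired conclusion $\omega_{l+1}(\tilde{\tilde\br})=1$ is equivalent to $\br+h_{l+1}(\al_g-\al_j)\notin\OT$ for every $j\neq g$.

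Then, for each fixed $j\neq g$, I would set $d_j:=\al_g-\al_j$ and consider the affine ray $t\mapsto \br+t\,d_j$. The parameter set $I_j:=\{\,t\ge 0:\br+t\,d_j\in\OT\,\}$ is the intersection of the convex set $\OT$ with a half-line and is therefore an interval. We have $0\in I_j$ (as $\br\in\OT$) and, by the reformulated hypothesis, $h_l\notin I_j$. Since $0<h_l<h_{l+1}$, if $h_{l+1}$ belonged to $I_j$, then convexity of $I_j$ would force $h_l\in I_j$, a contradiction; hence $h_{l+1}\notin I_j$, i.e.\ $\br+h_{l+1}(\al_g-\al_j)\notin\OT$. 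As $j\neq g$ was arbitrary, this yields $\omega_{l+1}(\tilde{\tilde\br})=1$.

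The only genuinely geometric input is the convexity of $\OT$, so the main point to be careful about is precisely this assumption: a circular (or any convex) aperture is fine, whereas an annular pupil with a central obstruction is non-convex and would require a separate treatment. The remaining subtlety is the usual open/closed boundary bookkeeping for $\OT$, which however does not affect the interval argument, since it only ever uses that an interval containing $0$ but not $h_l$ cannot contain a larger value $h_{l+1}$.
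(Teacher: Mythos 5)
Your proof is correct, and it takes a genuinely different route from the paper's. The paper exploits the explicit circular geometry of the aperture: writing $\OT$ as the disk of radius $T$, it rephrases the hypothesis as $\|\tilde \br - h_l\al_{\tilde g}\|>T$ for all $\tilde g\neq g$, expands $\|\tilde{\tilde \br}-h_{l+1}\al_{\tilde g}\|^2$ with the inner product (setting $h=h_l$, $h_{l+1}=h+\Delta$), substitutes the resulting lower bound on $\langle \br,\al_g-\al_{\tilde g}\rangle$, and uses $\|\br\|\le T$ to conclude $\|\tilde{\tilde \br}-h_{l+1}\al_{\tilde g}\|^2 > T^2 + (\Delta^2+\Delta h)\|\al_g-\al_{\tilde g}\|^2$. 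You instead isolate the only structural property actually needed, convexity of $\OT$: membership along the ray $t\mapsto \br+t(\al_g-\al_j)$ is governed by an interval $I_j$ containing $0$ but not $h_l$, which therefore cannot contain $h_{l+1}$. Your reformulation of the hypothesis and conclusion via $\tilde\br\in\OT(h_l\al_j)\iff \br+h_l(\al_g-\al_j)\in\OT$ is exactly right, and the interval argument is airtight and indifferent to open/closed boundary conventions, as you note. What each approach buys: yours is shorter, computation-free, and valid for an arbitrary convex aperture (e.g.\ polygonal pupils), whereas the paper's calculation is tied to the disk but delivers a quantitative margin --- the excess $(\Delta^2+\Delta h)\|\al_g-\al_{\tilde g}\|^2$ grows with both the height increment and the angular separation of the guide stars, which resonates with the paper's later remark that larger separations enlarge the non-overlap regions. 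Your caveat about non-convex apertures is also substantive rather than cosmetic: for an annular pupil with central obstruction the set $I_j$ need not be an interval (the ray can pass through the hole at $t=h_l$ and re-enter the annulus at $t=h_{l+1}$), so the proposition itself can fail there; consistently, the paper's simulation setup explicitly assumes no central obstruction.
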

\bfig
\centering
\bpict(5,5)
\put(0,0){\includegraphics[width=5cm]{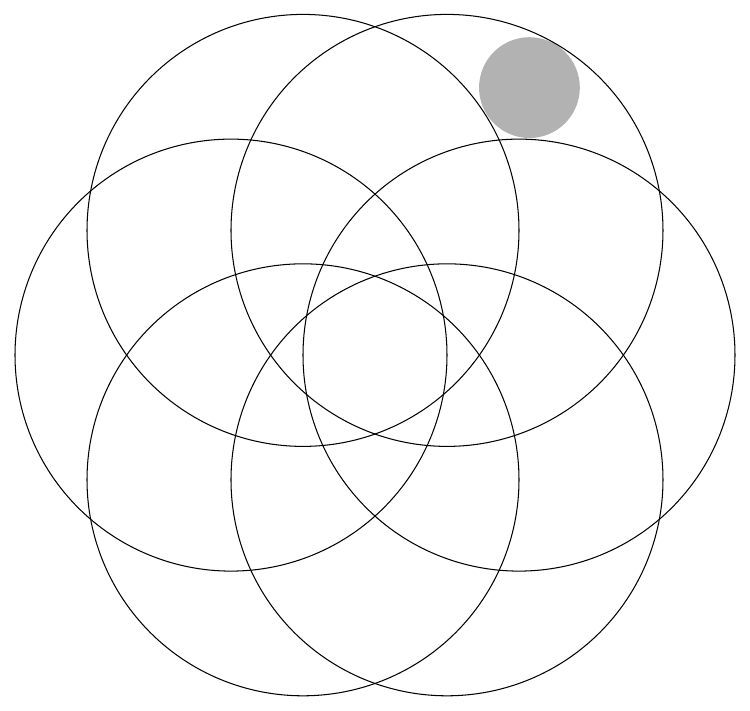}}
\put(4.9,4.1){\line(-2,0){1.3}}
\put(5,4){$B_{\rho_l}(\br_l)$}
\epict
\caption{A choice for the ball $B_{\rho_l}(\br_l)$ (dark grey) from Proposition \ref{Single_ball}. For this graphics, 6 different directions $\alpha_g$ have been used.}
\label{fig:Ball}
\efig
\begin{proof}
Due to their definition, $\tilde \br\in\OT (h_l\al_g)$ and $\tilde{\tilde \br}\in\OT (h_{l+1}\al_g)$. With $\omega_l (\tilde \br)=1$ it holds that $\tilde \br \notin \OT (h_l\al_{\tilde g})$ for all $\tilde g \neq g$. As $h_l\al_i$ is the center of the circle $\OT (h_l\al_i)$, we have in particular
\[\|\tilde \br - h_l\al_{\tilde g}\|> T \mbox{ for }\tilde g\neq g.\]
Showing $\omega_{l+1}(\tilde{\tilde \br})=1$ is equivalent to proving $\|\tilde{\tilde \br} - h_{l+1}\al_{\tilde g}\|>T$ for all $\tilde g\neq g$.
For simplicity of notation, we set $h=h_l$, $h_{l+1}=h+\Delta$, $\alpha:=\alpha_g$ and $\beta:=\alpha_{\tilde g}$. We thus obtain
\begin{eqnarray}\nonumber
  \|\tilde{\tilde \br}-h_{l+1}\alpha_{\tilde g}\|^2 &=& \|\br+(h+\Delta)\alpha-(h+\Delta)\beta\|^2\\\nonumber
  &=& \|\br+h(\alpha-\beta) +\Delta(\alpha-\beta)\|^2\\ \nonumber
  &=& \|\br+h(\alpha-\beta)\|^2+\Delta^2\|\alpha-\beta\|^2+2\Delta\langle \br+h(\alpha-\beta), \alpha-\beta\rangle\\
  &=& \|\br+h(\alpha-\beta)\|^2+(\Delta^2+2\Delta h)\|\alpha-\beta\|^2 +2\Delta\langle \br,\alpha-\beta\rangle. \hspace{1cm}\label{absch:1}
\end{eqnarray}
Additionally, we have
\[
  \|\tilde \br -h_l\alpha_{\tilde g}\|^2 = \|\br+h(\alpha-\beta)\|^2= \|\br\|^2+ h^2\|\alpha-\beta\|^2+2h\langle \br,\alpha-\beta\rangle > T^2
\]
or

\[
\langle \br,\alpha-\beta\rangle > \frac{T^2}{2h}-\frac{h}{2}\|\alpha-\beta\|^2-\frac{\|\br\|^2}{2h}
\]
Using this estimate we obtain with \eqref{absch:1}
\begin{eqnarray}\nonumber
  \|\tilde{\tilde \br}-h_{l+1}\alpha_{\tilde g}\|^2 &>& \|\br+h(\alpha-\beta)\|^2+(\Delta^2+2\Delta h)\|\alpha-\beta\|^2 +
  \frac{\Delta T^2}{h}-\Delta h\|\alpha-\beta\|^2-\frac{\Delta\|\br\|^2}{h}\\\nonumber
  &=& \|\br+h(\alpha-\beta)\|^2+(\Delta^2+\Delta h)\|\alpha-\beta\|^2 +\frac{\Delta T^2}{h}-\frac{\Delta\|\br\|^2}{h}.
\end{eqnarray}
Now as $\|\br\|\le T$ it follows $\frac{\Delta T^2}{h}-\frac{\Delta\|\br\|^2}{h}>0$, and with $\|\br+h(\alpha-\beta)\|=\|\tilde \br-h_l\alpha_{\tilde g}\|>T$ it follows
\[\|\tilde{\tilde \br}-h_l\alpha_{\tilde g}\|^2 > \|\br+h\alpha-h\beta\|^2+(\Delta^2+\Delta h)\|\alpha-\beta\|^2> T^2,\]
which concludes the proof.
\end{proof}

According to Proposition \ref{Single_ball}, for $h_1>0$ it exists
\[\br_1\in \OT (h_1\al_g) \mbox{ and } \rho_1> 0\]
such that
\begin{eqnarray}
  \mu (B_{\rho_1}(\br_1))&>&0\\
  B_{\rho_1}(\br_1)&\subset & \OT (h_1\al_g)\label{subsetB1}\\
  \omega_1 (\br) &=&1 \mbox{ for all }\br\in\OT (h_1\al_g).
\end{eqnarray}
Here, $\mu$ denotes the Lebesque measure.
Because of \eqref{subsetB1} there exists for all $\bar{\br}\in B_{\rho_1}(\br_1)$ an $\br\in \OT$ such that

\[\bar \br = \br+h_1\al_g.\]
For what follows, we define

\begin{eqnarray}\label{def:Omgtbar}
  \bOmgT &:=& \{\br\in \OT: \br+h_1\al_g\in B_{\rho_1}(\br_1)\}\subset \OT\\
  \bOmgl{g}{l} &:=& \{\br+h_l\al_g:\br\in \bar\Om^g_T\}
\end{eqnarray}
and obtain

\begin{proposition}\label{prop:ballsonlayers}
  With the above definitions we observe\\

  \begin{itemize}
    \item[(i)] \hspace{1cm}$\bOmgl{g}{l}\subset\OT (h_l\ag).$\\
    \item[(ii)] \hspace{1cm}$\bar\Omega^{g,1}=B_{\rho_1}(\br_1).$\\
    \item[(iii)] \hspace{1cm} $\forall \br \in \bOmgl{g}{l}: \omega_l (\br)=1$.
  \end{itemize}
\end{proposition}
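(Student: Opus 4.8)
The plan is to prove the three assertions in the order (ii), (i), (iii). The first two are immediate consequences of the fact that all the sets involved are rigid translates of one another, so they reduce to set-theoretic bookkeeping about the shift by $h_l\al_g$. The third statement is the substantive one, and I would derive it by induction on $l$, using the geometric estimate already contained in Proposition~\ref{Prop_omega=1} as the inductive step.

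First I would settle (ii). By definition $\bar\Omega^{g,1}=\{\br+h_1\al_g:\br\in\bOmgT\}$ with $\bOmgT=\{\br\in\OT:\br+h_1\al_g\in B_{\rho_1}(\br_1)\}$, so every element of $\bar\Omega^{g,1}$ lies in $B_{\rho_1}(\br_1)$ by construction, giving $\bar\Omega^{g,1}\subseteq B_{\rho_1}(\br_1)$. For the reverse inclusion I would invoke \eqref{subsetB1}: since $B_{\rho_1}(\br_1)\subset\OT(h_1\al_g)$, each $\bar\br\in B_{\rho_1}(\br_1)$ satisfies $\bar\br-h_1\al_g\in\OT$, so $\br:=\bar\br-h_1\al_g$ lies in $\bOmgT$ and $\bar\br=\br+h_1\al_g\in\bar\Omega^{g,1}$. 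The two inclusions yield equality.

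For (i) I would note that $\OT(h_l\al_g)=\OT+h_l\al_g$ is merely the aperture translated by $h_l\al_g$, while $\bOmgl{g}{l}=\bOmgT+h_l\al_g$. Since $\bOmgT\subset\OT$, applying the same translation to both sides gives $\bOmgl{g}{l}\subset\OT(h_l\al_g)$ at once. For (iii) I would argue by induction on $l$. The base case $l=1$ combines (ii) with the defining property of the ball from Proposition~\ref{Single_ball}: on $\bar\Omega^{g,1}=B_{\rho_1}(\br_1)$ one already has $\omega_1\equiv 1$. For the inductive step, assume $\omega_l(\br)=1$ for all $\br\in\bOmgl{g}{l}$. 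Any point of $\bOmgl{g}{l+1}$ can be written as $\tilde{\tilde\br}=\br+h_{l+1}\al_g$ with $\br\in\bOmgT\subset\OT$; setting $\tilde\br=\br+h_l\al_g\in\bOmgl{g}{l}$, the hypothesis gives $\omega_l(\tilde\br)=1$. Proposition~\ref{Prop_omega=1} then yields $\omega_{l+1}(\tilde{\tilde\br})=1$, closing the induction.

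The only real subtlety is recognizing that Proposition~\ref{Prop_omega=1} is exactly the inductive step one needs: the ball $\bOmgT$ is fixed once and for all inside the aperture, and the sets $\bOmgl{g}{l}$ are its rigid translates up the atmosphere along the single direction $\al_g$, so the overlap-one property propagates layer by layer. I do not anticipate a genuine obstacle here, because the heavy geometric estimate (that moving a separated point further up the same direction keeps it separated from the other shifted apertures) has already been discharged in Proposition~\ref{Prop_omega=1}; what remains is purely the translation bookkeeping of (i) and (ii) together with a clean induction.
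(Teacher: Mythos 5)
Your proof is correct and follows essentially the same route as the paper: (i) and (ii) are read off from the definitions via the translation by $h_l\al_g$, and (iii) is an induction on the layer index with Proposition~\ref{Prop_omega=1} as the inductive step, exactly as the paper applies that proposition at $l=1$ and then repeats the argument for $l=2,\dots,L-1$. Your explicit two-inclusion argument for (ii), using \eqref{subsetB1} to get the reverse containment, is slightly more careful than the paper's one-line remark, but it is the same idea.
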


\begin{proof}
(i) As $\bOmgT\subset\OT$, this follows directly from \eqref{def:Omgtbar}.\\
(ii) Follows also directly from the definition of $\bOmgl{g}{l}$ by setting $l=1$.\\
(iii) We know that for $\tilde \br\in B_{\rho_1}(\br_1)=\bar\Omega^{g,1}$ it holds that $\omega_1 (\tilde\br)=1$. Additionally, $\tilde \br = \br+h_1\al_g$ with $\br\in \bOmgl{g}{l}$, therefore
\begin{equation}
  \tilde{\tilde\br}= \br+h_2\al_g \in \bar\Omega^{2,g} \hspace{1cm}\forall \br \in\bOmgT.
\end{equation}

Setting $l=1$ in Proposition \ref{Prop_omega=1} yields $\omega_2(\br)=1$ for all $\br\in\bar\Omega^{g,2}$. Now the same argument can be repeated for $l=2,\dots , L-1$ and yields $\bar\Omega^{g,l+1}$, which finishes the proof.
\end{proof}

\subsection{Non-Uniqueness and Reconstructability}\label{reconstructability}
We have now collected all ingredients for a non-uniqueness result for the inversion of the atmospheric tomography operator.\\
\begin{proposition}
  Under the Assumptions \ref{GeomAssump} with $L>1$, the atmospheric tomography operator is not uniquely invertible.\\
\end{proposition}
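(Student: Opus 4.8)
The plan is to construct an explicit nonzero element of the nullspace of $\bA$, thereby showing that the equation $\bA\bPhi = \bphi$ cannot have a unique solution. The whole machinery of Section~\ref{nonuniqueness} has been assembled precisely for this: by Proposition~\ref{prop:ballsonlayers} we have, for a fixed guide star direction $\al_g$, a tower of sets $\bOmgl{g}{l}\subset\OT(h_l\ag)$ on each layer $l=1,\dots,L$ with the crucial property that $\omega_l(\br)=1$ for every $\br\in\bOmgl{g}{l}$. In words, each of these patches is seen by exactly one guide star, namely $\al_g$. The idea is to place turbulence supported on these non-overlapping patches on two different layers in such a way that the contributions cancel in the forward map.

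Concretely, I would take $L>1$ and pick two adjacent layers, say $l=1$ and $l=2$, and define a candidate $\bPhi=(\layerj{1},\dots,\layerj{L})$ that is zero on all layers except these two. On layer $1$, I would choose $\layerj{1}$ to be any nonzero $L_2$ function supported inside $\bOmgl{g}{1}=B_{\rho_1}(\br_1)$; on layer $2$, I would set $\layerj{2}$ to be the corresponding shifted and negated copy, supported inside $\bOmgl{g}{2}$, chosen so that $\layerj{1}(\br+h_1\ag) + \layerj{2}(\br+h_2\ag)=0$ for every $\br\in\bOmgT$. For the single direction $\al_g$ the forward operator $\Ao{g}$ sums $\layer{l}{g}$ over $l$, so by construction $\Ao{g}\bPhi = 0$. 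The remaining work is to check that $\Ao{j}\bPhi=0$ for every \emph{other} direction $j\neq g$ as well; this is where the $\omega_l=1$ property does the heavy lifting, since a point in $\bOmgl{g}{l}$ lies in no shifted pupil $\OT(h_l\al_j)$ for $j\neq g$, so the support of $\layerj{l}$ never registers in the $j$-th measurement. Hence $\bA\bPhi=0$ with $\bPhi\neq 0$.

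The main obstacle, and the one step I would be careful about, is verifying that the two shifted supports on layers $1$ and $2$ project onto the \emph{same} footprint in $\OT$ for direction $\ag$ so that the cancellation is exact, while simultaneously remaining invisible to all other directions. The definition of $\bOmgl{g}{l}$ as $\{\br+h_l\ag : \br\in\bOmgT\}$ is tailored to guarantee exactly this: both patches pull back under the respective shifts to the common set $\bOmgT\subset\OT$, so the pointwise sum $\data{g}(\br)=\layerj{1}(\br+h_1\ag)+\layerj{2}(\br+h_2\ag)$ is well-defined on $\bOmgT$ and vanishes there by the sign choice, while it is trivially zero outside $\bOmgT$ since both layers are zero there. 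I would also confirm that $\layerj{1},\layerj{2}$ genuinely lie in $L_2(\Omega_1)$ and $L_2(\Omega_2)$ and that $\bPhi\neq 0$ in the weighted product norm, which is immediate from $\mu(B_{\rho_1}(\br_1))>0$.

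Putting this together, $\bPhi$ is a nonzero element of $\ker\bA$, so whenever $\bA\bPhi_0=\bphi$ admits a solution $\bPhi_0$, the shifted function $\bPhi_0+\bPhi$ is a distinct solution; therefore $\bA$ is not uniquely invertible, which is the claim. I expect the author's proof to follow essentially this construction, with the only subtlety being notational bookkeeping of the shifts and the appeal to Proposition~\ref{prop:ballsonlayers}(iii) to rule out leakage into the other $G-1$ directions.
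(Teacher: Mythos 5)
Your proposal is correct and rests on exactly the same mechanism as the paper's proof: both exploit the sets $\bOmgl{g}{l}$ from Proposition \ref{prop:ballsonlayers}, using $\omega_l=1$ there to rule out leakage into directions $\tilde g\neq g$, and using the common footprint $\bOmgT$ to cancel the contributions along direction $\al_g$. The only difference is framing --- the paper modifies a given atmosphere arbitrarily on layers $1,\dots,L-1$ and compensates on layer $L$ to produce two distinct atmospheres with identical data, whereas you exhibit the difference of such a pair directly as a nonzero kernel element (with two layers instead of $L$); by linearity of $\bA$ these are the same argument.
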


\begin{proof}
  Let $\bPhi=(\layerj{1},\dots , \layerj{L})$ denote a given atmosphere, and $(\data{1},\dots , \data{G})$ the associated wavefronts in the telescope pupil, i.e.,
   \begin{equation}
     \data{g}(\br)=\sum_{l=1}^L \layer{l}{g}.
   \end{equation}
   We want to show that two different atmospheres create the same set of data. It is actually sufficient to show that two atmospheres create the same data $\data{g}$ for a fixed $g$. According to Proposition \ref{Single_ball} there exist $\br_1$ and $\rho_1$ s.t. $\omega_1(\br)=1$ for all $\br\in B_{\rho_1}(\br_1)$, and according to Proposition  \ref{prop:ballsonlayers} (iii) we have $\om_l(\br)=1$ for all $\br\in \bOmgl{g}{l}$.
   Let us restrict our attention to $\br\in\bOmgT$. Consequently, $\br+h_j\al_g\in\bOmgl{g}{l}$, and thus only function values of
    $\layerj{l}(\br)$ in $\bOmgl{g}{l}$ contribute to the function values of
    $ \varphi_{\alpha_g}(\br)$ in $\bar\Omega_T$. As $\omega_{l}(\br)=1$ for all $\br\in\bOmgl{g}{l}$,
     $\br\not\in\OT(h_l\al_{\tilde g})$ for $g\neq\tilde g$ and thus changes in $\layerj{l}$ on $\bOmgl{g}{l}$ leave the computation of $\data{\tilde g}$ unchanged. Next, for $l=1,\dots , L-1$ we choose
    
     \begin{eqnarray}\nonumber
       \tlayerjrec{l}(\br+h_l\al_g) &=& \left\{
       \begin{array}{c}
         \layerj{l}(\br+h_l\al_g)\mbox{ for } \br\in \OT\setminus\bOmgT\\
         \mbox{ arbitrarily for } \br\in \bOmgl{g}{l}
       \end{array}\right .
\hspace{0.1cm}l=1,\dots , L-1
       \\ && \label{changed_atmo}\\
       \tlayerjrec{L}(\br+h_l\al_g)&=&\left\{
       \begin{array}{c}
         \layerj{L}\mbox{ for } \br\in \OT\setminus\bOmgT\\
         \data{g}(\br)-\sum_{l=1}^{L-1}\tlayerjrec{l}(\br+h_l\al_g) \mbox{ for }   \br\in \bOmgl{g}{l}.
       \end{array}\right .  \nonumber
     \end{eqnarray}
Consider the related atmospheric tomography data
$$\tilde\varphi_{\al_{\tilde g}}(\br)= \sum_{l=1}^L \tlayerjrec{l}(\br+h_l\al_{\tilde g}).$$
As the atmosphere has only been changed on the sets $\bOmgl{g}{l}$ which are disjunct to $\OT(h_l\al_{\tilde g})$ for $\tilde g\neq g$ we observe $\data{\tilde g}=\tilde\varphi_{\al_{\tilde g}}$. Additionally, we have $\tilde\varphi_{\al_{g}}(\br)=\data{g}(\br)$ for $\br\in\OT\setminus\bOmgT$ and
\begin{eqnarray*}\nonumber
  \tilde\varphi_{\al_{g}}(\br)&=&\sum_{l=1}^L \tlayerjrec{l}(\br+h_l\al_{g})\\
  &&\\ \nonumber
  &=& \sum_{l=1}^{L-1}\tlayerjrec{l}(\br+h_l\al_{g})+\data{g}(\br)-\sum_{l=1}^{L-1}\tlayerjrec{l}(\br+h_l\al_g)\\&=&\data{g} (\br)\hspace{1cm}\br\in\OT
  \mbox{ for }\br\in\bOmgl{g}{l},
\end{eqnarray*}
i.e., we have shown that $\tilde\varphi_{\al_{\tilde g}}=\data{\tilde g}$
for $\tilde g=1,\dots ,G$ although
$\layerj{l}\neq\tlayerjrec{l}$ for
$l=1,\dots , L$, which concludes the proof.
\end{proof}
~\\
\begin{remark}
\begin{enumerate}
  \item[(i)]Please note that changing the atmosphere according to \eqref{changed_atmo}  may lead to jumps at the boundaries of $\bOmgl{g}{l}$. However, changing the atmosphere smoothly at the boundary of $\bOmgl{g}{l}$ can be done by using the same ideas as above, e.g., by changing the atmosphere as in \eqref{changed_atmo}, but on an inner subset of $\bOmgl{g}{l}$ and then constructing a smooth transition of the atmosphere from the boundary of the inner subset to the boundary of $\bOmgl{g}{l}$.
  \item[(ii)] The construction of two different atmospheres that produce the same data depends on the construction of the set $B_{\rho}(r_1)$ for a direction $\al_g$ with $\om_1(\br)=1$ for $\br\in B_{\rho}(r_1)$. In fact with the same arguments one can show that the atmosphere cannot be uniquely determind in  the larger areas were $\om_l(\br)=1$ holds.
  \item[(iii)] It is easy to see that for systems with large separation, i.e., where the guide stars have a larger distance from each other, the area with no overlap grows. Therefore it will be more difficult to obtain meaningful reconstructions of the atmosphere, which may be an argument against such systems.
\end{enumerate}
\end{remark}
~\\
In a next step we would like to characterize the solutions that can be reconstructed by standard regularization methods. Let us first review some standard results from regularization theory. For solving a linear equation $Ax=y$, the concept of the generalized solution $x^\dagger$ is frequently used.
If $P_{\cal{R}(A)}$ denotes the orthogonal projection on $\cal{R}(A)$, the range of $A$, then $x^\dagger$ can be characterized as the (unique) solution of
\[Ax = P_{\cal{R}(A)}y\]
with minimal norm, see, e.g., \cite{Louis89,Engl}. In paricular we have\\

\begin{proposition}(\cite{Louis89}, Prop. 3.1.1)\label{prop:generalized_solution}\\ The generalized solution
  $x^\dagger=A^\dagger y$ is the unique solution of the normal equation
\begin{equation}
A^\ast A x = A^\ast y
\end{equation}
in $\overline{\mathcal{R}(A^\ast)}= \mathcal{N}(A)^\perp$.
\end{proposition}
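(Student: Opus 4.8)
The plan is to reduce the statement to two standard facts about bounded operators between Hilbert spaces and then to identify the minimal-norm least-squares solution with the minimal element of a coset. First I would establish the orthogonal decomposition $\mathcal{N}(A)^\perp = \overline{\mathcal{R}(A^\ast)}$. This follows directly from the adjoint relation: $x \in \mathcal{N}(A)$ iff $\langle Ax, z\rangle = 0$ for all $z$, iff $\langle x, A^\ast z\rangle = 0$ for all $z$, iff $x \perp \mathcal{R}(A^\ast)$, iff $x \perp \overline{\mathcal{R}(A^\ast)}$. Hence $\mathcal{N}(A) = \overline{\mathcal{R}(A^\ast)}^\perp$, and taking orthogonal complements of this closed subspace yields the claimed identity. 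Applying the same argument to $A^\ast$ in place of $A$ gives $\mathcal{N}(A^\ast) = \overline{\mathcal{R}(A)}^\perp$, which I will need below.

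Second, I would show that the solution set of the normal equation coincides with the set of least-squares solutions, i.e.\ with the solutions of $Ax = P_{\overline{\mathcal{R}(A)}}\,y$. Indeed, $A^\ast A x = A^\ast y$ iff $A^\ast(Ax-y)=0$ iff $Ax-y \in \mathcal{N}(A^\ast) = \overline{\mathcal{R}(A)}^\perp$; since $Ax \in \overline{\mathcal{R}(A)}$, this says precisely that $Ax$ is the orthogonal projection of $y$ onto $\overline{\mathcal{R}(A)}$, which is exactly the defining equation of the generalized solution (up to the minimal-norm requirement). Here I assume, as is implicit in the definition of $A^\dagger$, that $y \in \mathcal{D}(A^\dagger) = \mathcal{R}(A)\oplus\mathcal{R}(A)^\perp$, so that $P_{\overline{\mathcal{R}(A)}}\,y \in \mathcal{R}(A)$ and a solution of the normal equation exists.

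Third, I would fix any solution $x_0$ and observe that the full solution set is the affine subspace $x_0 + \mathcal{N}(A)$, because the difference of any two solutions lies in $\mathcal{N}(A)$. Splitting $x_0 = u + v$ orthogonally with $u \in \mathcal{N}(A)$ and $v \in \mathcal{N}(A)^\perp$, every element of the coset has the form $v + w$ with $w \in \mathcal{N}(A)$, and by Pythagoras $\|v+w\|^2 = \|v\|^2 + \|w\|^2$, which is minimized uniquely at $w = 0$. Thus the unique minimal-norm solution is $v \in \mathcal{N}(A)^\perp = \overline{\mathcal{R}(A^\ast)}$, and by definition this is $x^\dagger$. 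This simultaneously yields existence, uniqueness, and the membership $x^\dagger \in \overline{\mathcal{R}(A^\ast)}$.

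The main obstacle is not any deep difficulty but the careful bookkeeping of the closure and domain issues: one must work with $\overline{\mathcal{R}(A)}$ rather than $\mathcal{R}(A)$, since the atmospheric tomography operator need not have closed range, and one must restrict to $y \in \mathcal{D}(A^\dagger)$ so that $P_{\overline{\mathcal{R}(A)}}\,y$ actually lands in $\mathcal{R}(A)$ and the normal equation is solvable. Once these points are handled, the proof is the purely formal combination of the orthogonal decomposition of the first step with the minimal-element-of-a-coset computation of the third step.
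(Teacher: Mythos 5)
Your proof is correct. The paper gives no proof of this proposition at all---it is quoted directly from the cited reference (Louis, Prop.\ 3.1.1)---and your argument is precisely the standard one from that literature: the adjoint identity $\mathcal{N}(A)^\perp=\overline{\mathcal{R}(A^\ast)}$, the equivalence of the normal equation $A^\ast Ax=A^\ast y$ with the least-squares condition $Ax=P_{\overline{\mathcal{R}(A)}}\,y$ via $\mathcal{N}(A^\ast)=\overline{\mathcal{R}(A)}^\perp$, and the coset-plus-Pythagoras argument identifying the unique minimal-norm solution as the unique solution lying in $\mathcal{N}(A)^\perp$; your explicit restriction to $y\in\mathcal{D}(A^\dagger)=\mathcal{R}(A)\oplus\mathcal{R}(A)^\perp$, needed because the range need not be closed, is exactly the right bookkeeping.
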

~\\

Consequently, $x^\dagger$ belongs to $\overline{\mathcal{R}(A^\ast)}$. Additionally, most of the standard regularization methods produce approximations to $x^\dagger$ that belong to $\mathcal{R}(A^\ast)$. This is easy to see for Landweber iteration, which is defined by
\begin{equation}
  x_{k+1}^\delta = x_k^\delta + \beta A^\ast (y^\delta -Ax_k^\delta).
\end{equation}
By induction we see that if the initial iterate $x_0$ belongs to $\mathcal{R}(A^\ast)$ - which is the case for the standard choice $x_0=0$ - then also $x_{k+1}\in\mathcal{R}(A^\ast)$.
The conjugate gradient method has a similiar structure, i.e., the update belongs to $\mathcal{R}(A^\ast)$ and thus also the computed approximation as long as the initial iterate also belongs to $\mathcal{R}(A^\ast)$. It is also straight forward to see that the minimizer of the Tikhonov functional
\begin{equation}
  x_\alpha^\delta = (A^\ast A +\alpha I )^{-1}A^\ast y^\delta,
\end{equation}
belongs to $\mathcal{R}(A^\ast)$.
Consequently the structure of $\mathcal{R}(A^\ast)$ has a significant impact on the reconstructions. As we will see,
the existence of the sets $\bOmgl{g}{l}$ results in a specific structure of the adjoint for the atmospheric tomography problem.\\

\begin{proposition}\label{Prop:adjointonBall}
  For $\varphi =(\data{1}, \dots , \data{G})$ the adjoint of the atmospheric tomography operator $\bA$ on the sets $\bOmgl{g}{l}$ is given by
  \begin{equation}\label{eq:adjoint_on_Ball}
    \left (\bA^\ast\varphi\right )^{(l)}(\br +h_l\al_g) = \gamma_l\data{g}(\br)  \hspace{1cm} \forall l=1,\dots , L \mbox{ and } \br\in\bOmgT.\
  \end{equation}
\end{proposition}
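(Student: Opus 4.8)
The plan is to reduce the statement to a direct evaluation of the known closed form of the adjoint, and then to let the overlap geometry from Proposition~\ref{prop:ballsonlayers} do all the work. First I would read off the $l$-th component of $\bA^\ast\varphi$ from the formulas \eqref{Astar} and \eqref{Aad}: summing the contributions of all directions gives, for an arbitrary evaluation point $\mathbf p$,
\[
\left(\bA^\ast\varphi\right)^{(l)}(\mathbf p)=\sum_{\tilde g=1}^G \gamma_l\,\data{\tilde g}\!\left(\mathbf p-h_l\al_{\tilde g}\right)\chi_{\OT(h_l\al_{\tilde g})}(\mathbf p),
\]
where I rename the summation index to $\tilde g$ so as to keep it distinct from the fixed direction $g$ appearing in the statement. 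The whole proof then consists of substituting the particular point $\mathbf p=\br+h_l\al_g$ with $\br\in\bOmgT$ and identifying which summands survive.

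Next I would argue that the sum collapses to a single term. By the definition \eqref{def:Omgtbar} of $\bOmgl{g}{l}$, the point $\mathbf p=\br+h_l\al_g$ lies in $\bOmgl{g}{l}$. Proposition~\ref{prop:ballsonlayers}(iii) then gives $\omega_l(\mathbf p)=1$, so by the definition of the overlap function exactly one of the indicators $\chi_{\OT(h_l\al_{\tilde g})}(\mathbf p)$ equals $1$ while all the others vanish. Proposition~\ref{prop:ballsonlayers}(i) identifies the surviving one: since $\mathbf p\in\bOmgl{g}{l}\subset\OT(h_l\al_g)$, we have $\chi_{\OT(h_l\al_g)}(\mathbf p)=1$, hence the nonvanishing term is precisely the one with $\tilde g=g$. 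Substituting $\mathbf p-h_l\al_g=\br$ in that surviving term yields $\gamma_l\,\data{g}(\br)$, which is the claimed identity; since Proposition~\ref{prop:ballsonlayers} holds for all layers simultaneously, this is valid uniformly for $l=1,\dots,L$.

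I do not expect a genuine obstacle here, as the result is in essence a bookkeeping consequence of the support and overlap structure already established in the previous propositions. The one point that needs care is the correct pairing of indices: it is not enough to observe that $\omega_l(\mathbf p)=1$ forces a single summand to survive; one must additionally invoke part (i) to guarantee that the surviving direction is $g$ itself and not some other $\tilde g$, so that the argument of $\data{g}$ reduces exactly to $\br$. Combining parts (i) and (iii) of Proposition~\ref{prop:ballsonlayers} at this step is therefore the crux of the (otherwise routine) computation.
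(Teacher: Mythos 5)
Your proposal is correct and follows essentially the same route as the paper: both evaluate the closed-form adjoint \eqref{Astar}--\eqref{Aad} at the point $\br+h_l\al_g$ and use Proposition~\ref{prop:ballsonlayers} to kill every summand with $\tilde g\neq g$, the paper phrasing this as the disjointness $\bOmgl{g}{l}\cap\OT(h_l\al_{\tilde g})=\emptyset$ while you phrase the same fact as $\omega_l=1$ forcing a single surviving indicator, identified via part (i). Your explicit remark that part (i) is needed to pin down \emph{which} indicator survives is a correct and slightly more careful articulation of the step the paper states directly.
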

\begin{proof}
  According to \eqref{Astar}, \eqref{Aad}, the adjoint of $\bf A$ is given as
  \begin{equation*}
    \bA^\ast \bphi = \sum_{g=1}^{G} \Ao{g}^\ast \data{g}
  \end{equation*}
with
  \begin{eqnarray*}
  \left (\Ao{g}^\ast \data{g}\right )(\br) &=& \left(
\begin{array}{c}
   \gamma_1 \cdot  \data{g}\left(\br - h_1\ag\right) \chi_{\OT(h_1\ag )}\left(\br\right )\\
   \vdots\\
    \gamma_l \cdot  \data{g}\left(\br - h_l\ag\right) \chi_{\OT(h_l\ag )}\left(\br\right )\\
    \vdots\\
     \gamma_L \cdot  \data{g}\left(\br - h_L\ag\right) \chi_{\OT(h_L\ag )}\left(\br\right )
\end{array}\right).
\end{eqnarray*}
Now let $\br\in \bOmgT$. Then $\br+h_l\al_g\in\bOmgl{g}{l}$ and
\begin{equation}
  \left (\Ao{g}^\ast \data{g}\right )^{(l)}(r+h_l\al_g)=\gamma_l\data{g}(r+h_l\al_g-h_l\al_g)\chi_{\OT(h_l\ag )}\left(\br+h_l\al_g\right ) = \gamma_l\data{g}(r)
\end{equation}
as $r+h_l\al_g\in\OT(h_l\al_g)$ for $\br\in\Omega_T^{g}$. Moreover,
\begin{eqnarray}
  \left (\Ao{\tilde g}^\ast \data{g}\right )^{(l)}(r+h_l\al_g)=\gamma_l\data{g}(r+h_l(\al_g-\al_{\tilde g})\chi_{\OT(h_l\al_{\tilde g} )}\left(\br\right ) = 0,
\end{eqnarray}
as $\br+h_l\al_g\in\bOmgl{g}{l}$ and $\bOmgl{g}{l}\cap \OT(h_l\al_{\tilde g})=\emptyset$ for $g\neq\tilde{g}$. Therefore

\begin{equation}
  \left (\bf A^\ast\varphi\right )^{(l)}(\br+h_l\al) = \sum_{\tilde g=1}^G \left (\Ao{\tilde g}\data{\tilde g}\right )^{(l)}(\br+h_l\al)=\gamma_l\data{g}(r)
\end{equation}
which concludes the proof.
\end{proof}
~\\
Proposition \ref{Prop:adjointonBall} has a interesting consequence on the structure of the adjoint on the sets $\bOmgl{g}{l}$:\\

\begin{proposition}\label{prop:equality_on_Balls}
For $l,k\in 1,\dots , L$ and $\bphi = (\data{1},\dots , \data{G})$ holds
\begin{equation}\label{eq:equality_on_Balls}
  \left ( \bA^\ast \bphi\right )^{(l)}(\bOmgl{g}{l})=\frac{\gamma_l}{\gamma_k}\left ( \bA^\ast \bphi\right )^{(k)}(\bar\Omega^{g,k}).
\end{equation}
\end{proposition}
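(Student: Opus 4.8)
The plan is to derive \eqref{eq:equality_on_Balls} directly from Proposition \ref{Prop:adjointonBall}, which already pins down the adjoint explicitly on each of the sets $\bOmgl{g}{l}$, so that essentially no new work beyond a change of parameter is required. The key observation is that, by definition \eqref{def:Omgtbar}, the sets $\bOmgl{g}{l}$ and $\bar\Omega^{g,k}$ are the images of one and the same base set $\bOmgT$ under the two shift maps $\br\mapsto\br+h_l\al_g$ and $\br\mapsto\br+h_k\al_g$. Each of these maps is a bijection from $\bOmgT$ onto the respective set, so I would use $\br\in\bOmgT$ as a common parameter and compare the two components of $\bA^\ast\bphi$ point by point over this shared index set.

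Concretely, I would fix $\br\in\bOmgT$ and invoke Proposition \ref{Prop:adjointonBall} once for layer $l$ and once for layer $k$, obtaining
\begin{equation}
  \left(\bA^\ast\bphi\right)^{(l)}(\br+h_l\al_g)=\gamma_l\data{g}(\br),
  \qquad
  \left(\bA^\ast\bphi\right)^{(k)}(\br+h_k\al_g)=\gamma_k\data{g}(\br).
\end{equation}
Since $\gamma_k>0$, the second identity can be solved for the common factor, $\data{g}(\br)=\gamma_k^{-1}\left(\bA^\ast\bphi\right)^{(k)}(\br+h_k\al_g)$, and substituting this into the first yields
\begin{equation}
  \left(\bA^\ast\bphi\right)^{(l)}(\br+h_l\al_g)=\frac{\gamma_l}{\gamma_k}\left(\bA^\ast\bphi\right)^{(k)}(\br+h_k\al_g)
\end{equation}
for every $\br\in\bOmgT$. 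This is exactly the asserted equality, read along the natural parametrization of the two balls.

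The only point that needs a word of care — and which I expect to be the sole (minor) obstacle — is the interpretation of the notation in \eqref{eq:equality_on_Balls}: the two functions $\left(\bA^\ast\bphi\right)^{(l)}$ and $\left(\bA^\ast\bphi\right)^{(k)}$ live on the genuinely different subsets $\bOmgl{g}{l}$ and $\bar\Omega^{g,k}$ of $\mathbb{R}^2$, so the equality cannot be a pointwise identity in the literal sense. I would therefore state explicitly that \eqref{eq:equality_on_Balls} is to be understood under the identification $\br+h_l\al_g\leftrightarrow\br+h_k\al_g$ induced by the common preimage $\br\in\bOmgT$; with that convention fixed, the computation above is a one-line consequence of Proposition \ref{Prop:adjointonBall} and involves no further analytic estimates.
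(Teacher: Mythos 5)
Your proof is correct and follows essentially the same route as the paper: both arguments parametrize $\bOmgl{g}{l}$ and $\bar\Omega^{g,k}$ by the common base set $\bOmgT$ via the shifts $\br\mapsto\br+h_l\al_g$ and $\br\mapsto\br+h_k\al_g$, then apply Proposition \ref{Prop:adjointonBall} on each layer and compare, the algebra being identical. Your explicit remark that \eqref{eq:equality_on_Balls} must be read under the identification $\br+h_l\al_g\leftrightarrow\br+h_k\al_g$ induced by the common preimage $\br\in\bOmgT$ merely makes precise a convention the paper leaves implicit.
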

\begin{proof}
  It is
\begin{equation*}
  \br_l\in\bOmgl{g}{l}\Leftrightarrow \exists \br\in\bOmgT: \br_l=\br+h_l\al_g,
\end{equation*}
and therefore $\br_k=\br+h_k\al_g\in \bOmgl{g}{k}$, or more compact,
\begin{equation}
  \bOmgl{g}{l}=\bOmgT+h_l\al_g,
\end{equation}
i.e., the sets $\bOmgl{g}{l}$ are shifted versions of $\bOmgT$. Now, according to \eqref{eq:adjoint_on_Ball} we obtain for all $\br\in\bOmgT$
\begin{eqnarray*}
    \left (\bA^\ast\varphi\right )^{(l)}(\br +h_l\al_g) &=& \frac{\gamma_l\gamma_k}{\gamma_k}\data{g}(\br)\\
    &=&\frac{\gamma_l}{\gamma_k}\left (\bA^\ast\varphi\right )^{(k)}(\br +h_k\al_g)
\end{eqnarray*}
and therefore \eqref{eq:equality_on_Balls}.
\end{proof}
~\\

Now according to Proposition \ref{prop:generalized_solution} and the comments thereafter, the generalized solution of the problem
\[\bA\bPhi=\bphi =\left (\data{1},\dots,\data{G}\right )^T \]
belongs to the range of the adjoint of $\bA$, i.e., $\bPhi^\dagger\in\mathcal{R}(\bA^\ast)$, and the same holds for the regularized solutions $\bPhi^\delta_{reg}$ obtained for most of the popular regularization techniques, e.g., for iterative reconstructors or Tikhonov regularization. Thus we derive from Proposition \ref{prop:equality_on_Balls} the following\\

\begin{remark}\label{rem:RecsPerp}
The generalized solution $\bPhi^\dagger$ as well as regularized reconstructions that belong to ${\mathcal{R}(\bA^\ast)}$ have in the sets $\bOmgl{l}{g}$ - apart from a scaling factor - exactly the same values.
\end{remark}
~\\

Keeping in mind that the original turbulent layers $\layerj{l}, ~l=1,\dots , L,$ are random functions that are independent of each other, it seems unlikely that their function values in the sets $\bOmgl{l}{g}$ differ only by a factor. As a consequence, this would mean that we will only be able to reconstruct turbulences that are not appearing naturally.

\subsection{A remark on Landweber-Kaczmarz for Atmospheric Tomography}

The structure of the operator $\bA$, i.e., its description by a set of operator equations \eqref{KaczmarzEq} suggests the use of the Landweber - Kaczmarz method, see Algorithm \ref{KaczAlg}.

\begin{algorithm}
\caption{Kaczmarz iteration\label{KaczAlg}}
\begin{algorithmic}
\STATE Choose $\bPhi_0$
\FOR {$i=1,\dots $}
\STATE$\bPhi_{i,0}=\bPhi_{i-1}$
\FOR {$g=1, \dots, G$}
\STATE $\bPhi_{i,g}=\bPhi_{i,g-1}+\beta_g\cdot\Ao{g}^\ast \left (\data{g}-\Ao{g}\bPhi_{i,g-1}\right )$
\ENDFOR
\STATE $\bPhi_{i}=\bPhi_{i,G}$
\ENDFOR
\end{algorithmic}
\end{algorithm}

Unfortunately, it has been observed that the method applied to the atmospheric tomography problem does only converge if the scaling parameters $\beta_g$ are chosen in dependence on the iteration number and decrease towards zero, see \cite{RoRa13}. This behaviour may be explained by the following
\begin{proposition}
Let $\bPhi_{k,g-1}$ be arbitrarily given. Then
 \begin{equation}\label{KaczIter}
   \bPhi_{i,g}=\bPhi_{i,g-1}+\beta_g\cdot\Ao{g}^\ast \left (\data{g}-\Ao{g}\bPhi_{i,g-1}\right )
 \end{equation}
 solves the equation
 $$\Ao{g}\bPhi_{k,g}= \beta_g\cdot\data{g}+(1-\beta_g)\Ao{g}\bPhi_{k,g-1}$$.
\end{proposition}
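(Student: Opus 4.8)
The plan is to reduce the claim to the already-established identity \eqref{ident}, which states that $\Ao{g}$ inverts $\Ao{g}^\ast$ on its range, i.e., $\Ao{g}\Ao{g}^\ast\psi=\psi$ for every $\psi\in L_2(\OT)$. The entire argument is essentially a one-line computation once one recognizes that the residual appearing in the update is an element of $L_2(\OT)$ to which this identity applies.

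First I would simply apply the operator $\Ao{g}$ to both sides of the defining update \eqref{KaczIter}. Using linearity of $\Ao{g}$ this gives
$$\Ao{g}\bPhi_{i,g}=\Ao{g}\bPhi_{i,g-1}+\beta_g\,\Ao{g}\Ao{g}^\ast\left(\data{g}-\Ao{g}\bPhi_{i,g-1}\right).$$

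The key observation is that the residual $\data{g}-\Ao{g}\bPhi_{i,g-1}$ lies in $L_2(\OT)$: indeed $\data{g}\in L_2(\OT)$ is the measured data, and $\Ao{g}\bPhi_{i,g-1}\in L_2(\OT)$ since $\Ao{g}$ maps into $L_2(\OT)$. Hence the identity \eqref{ident} applies directly to this residual, yielding $\Ao{g}\Ao{g}^\ast\left(\data{g}-\Ao{g}\bPhi_{i,g-1}\right)=\data{g}-\Ao{g}\bPhi_{i,g-1}$.

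Substituting this back and collecting the terms proportional to $\Ao{g}\bPhi_{i,g-1}$ then gives
$$\Ao{g}\bPhi_{i,g}=\Ao{g}\bPhi_{i,g-1}+\beta_g\left(\data{g}-\Ao{g}\bPhi_{i,g-1}\right)=\beta_g\,\data{g}+(1-\beta_g)\Ao{g}\bPhi_{i,g-1},$$
which is exactly the asserted equation. I do not expect any genuine obstacle here; the single point that warrants a moment's care is the verification that the residual belongs to $L_2(\OT)$ so that \eqref{ident} is legitimately applicable, and this is immediate from the mapping properties of $\Ao{g}$. I would also note in passing that the mismatch between the indices $k$ and $i$ in the statement is merely a typographical inconsistency, both referring to the same outer iteration counter.
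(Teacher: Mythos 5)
Your proof is correct and coincides with the paper's own argument: both apply $\Ao{g}$ to the update \eqref{KaczIter} and invoke the identity \eqref{ident}, $\Ao{g}\Ao{g}^\ast\psi=\psi$ for $\psi\in L_2(\OT)$, on the residual $\data{g}-\Ao{g}\bPhi_{i,g-1}$ to collect terms. Your added remarks --- checking that the residual indeed lies in $L_2(\OT)$ and flagging the $k$/$i$ index mismatch as typographical --- are both accurate and merely make explicit what the paper leaves implicit.
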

\begin{proof}
Applying $\Ao{g}$ to both sides of \eqref{KaczIter} yields
\begin{eqnarray*}
  \Ao{g}\bPhi_{i,g}&=&\Ao{g}\bPhi_{i,g-1}+\beta_g\cdot\Ao{g}\Ao{g}^\ast \left (\data{g}-\Ao{g}\bPhi_{i,g-1}\right )\\
  &\stackrel{\eqref{ident}}{=}& \beta_{g}\data{g}+(1-\beta_g)\Ao{g}\bPhi_{i,g-1}.
\end{eqnarray*}
\end{proof}
\begin{remark}
The specific scaling parameter choice $\beta_g=1$ for all $g=1,\dots, G$ yields that $\bPhi_{i,g}$ solves
$$\Ao{g}\bPhi_{k,g}= \data{g}.$$
\end{remark}
For the Landweber-Kaczmarz method with scaling parameter $\beta_g=1$ this means that each new iterate $\bPhi_{k,g}$ solves the above equation exactly. If the atmospheric tomography equation $\bA \bPhi = \bphi$ has a solution $\bPhi^\dagger$, and ${\cal{N}}(\Ao{g})=\{0\}$ for $g=1,\dots , G$, then in particular $\Ao{1}\bPhi^\dagger=\data{1}$ and the first iterate produces the solution, regardless of the starting value. If, however, solutions of $\Ao{g}\bPhi=\data{g}$ do exist that are no solution to \eqref{AtmTomo}, then it might happen that the iteration jumps in a circular way between those solutions and the iteration does not converge at all with this scaling parameter choice.

\section{Numerics}\label{numerics}
Now let us give some numerical verification to our developed theory. It is a well known issue for MCAO systems that the quality of the AO correction degrades if we move further away from the center. As we will see, we obtain the best correction in the area with highest overlap, i.e., for $\omega_l(x)=G$ for l=1,\dots ,L and it drops dramatically with the number of overlaps.
For our verification we use a setup similar to the MORFEO instrument \cite{MORFEO} of the E-ELT. We simulate three turbulent layers and assume that the deformable mirrors are conjugated to the height of the layers. By doing so, we avoid the fitting step that projects the reconstructed layers onto the deformable mirrors. For the reconstruction process we use different numbers of {\it Shack Hartmann Wavefront Sensors (SH WFS)}. In this way we vary the amount of data, which impacts the reconstruction quality. The SH WFS consists of a quadratic array of small lenslets and a CCD photon detector lying behind this array. The vertical and horizontal shifts of the focal points determine the average slope of the wavefront over the area of the
lens, known as subaperture. For the parameters of the system and the simulation see Table \ref{tab:MorfeoParams} and \ref{tab:SimParam}.\\
\begin{table}[!ht]
\begin{minipage}[t]{6cm}\centering
\begin{tabular}{|c|c|}\hline
  Telescope diameter & 42m\\
  \hline
  Central obstruction & no\\
  \hline
  Outer Scale $L_0$ & 25m\\
  \hline
  Fried parameter & 15.7cm\\
  \hline
\end{tabular}
\end{minipage}
\begin{minipage}[t]{6cm}
 \begin{tabular}{|c|c|c|}\hline
   Layer & Height & Strength\\
   \hline\hline
  1 & 0m & 0.75\\
  \hline
  2 & 4000m & 0.15 \\
  \hline
  3 & 12700m & 0.1\\ \hline
 \end{tabular}
\end{minipage}
\caption{System parameters similar to the MORFEO instrument at the E-ELT and the simulated atmosphere.}\label{tab:MorfeoParams}
\end{table}

\begin{table}[!ht]
\begin{minipage}[t]{5cm}\centering
\begin{tabular}{|c|c|}\hline
  \multicolumn{2}{|c|}{WFS for NGS}\\
  \hline\hline
  \# & 2-12\\
  \hline
  Type & SH-WFS\\
  \hline
  Geometry & 84x84 Subap\\
  \hline
  \# of Photons & 10000\\
  Wavelength & 589nm\\
  \hline
\end{tabular}
\end{minipage}
\begin{minipage}[t]{6cm}
 \begin{tabular}{|c|c|c|c|}\hline
   DM &actuators &Height&spacing\\
   \hline\hline
  1 & 85x85 & 0m & 0.5m\\
  \hline
  2 & 47x47 &4000m & 1m \\
  \hline
  3 & 53x53 &12700m & 1m\\ \hline
 \end{tabular}
\end{minipage}
\caption{Simulation parameters of wavefront sensor and deformable mirrors}\label{tab:SimParam}
\end{table}
As quality criteria for the reconstruction quality we use both the $L_2$ error of the reconstructed layers and the Strehl ratio. The latter is defined as the ratio between the maximum of the real energy distribution of incoming light in the image plane $I(x,y)$ over the hypothetical distribution $I_D(x,y)$, which stems from the assumption of diffraction-limited imaging,
\begin{equation}
  \text{SR}:= \frac{\max_{(x,y)}I(x,y)}{\max_{x,y}I_D(x,y)}.
\end{equation}
By its definition, $\text{SR}\in [0,1]$; frequently it is also given in $\%$, i.e., $\text{SR}\in [0,100\%]$. A Strehl ration $\text{SR}=1 ~(100\%)$ means that the influence of the atmosphere has been removed from the observation, i.e., the observation is only diffraction limited. For its numerical evaluation, the Marechal criterion is used. For more details on the Strehl ratio we refer to \cite{Roddier}. The simulations of the forward evaluation have been carried out in the internal and entirely MATLAB-based AO simulation tool MOST, which has been developed by the Austrian Adaptive Optics (AAO) team. The reconstructions have been carried out using the FEWHA algorithm \cite{YuHeRa13b,StadlerRamlau2021,StadlerRamlau2022}. The evaluation of the Strehl ratios is done at a radially symmetric grid, see Figure \ref{fig:Strehlpoints}.
Typically, the SH WFS suffers from read-out noise and photon noise. The read-out noise is due to errors while reading photons via the charge-coupled device (CCD) detector planes. This kind of noise is measured in electrons per pixel and set to $3.0$ for the simulations. The photon noise is related to the number of photons that are sensed by the CCD in a subaperture during a certain time frame. For an NGS with a large number of photons the noise is commonly approximated by a Gaussian random variable with zero mean and covariance matrix $C_\eta$. The noise is identically distributed in each subaperture and the x- and y-measurements are uncorrelated. Hence, the covariance matrix is given by
\begin{equation}
C_\eta = \sigma^2 I,
\end{equation}
where $\sigma^2$ is the noise variance of a single measurement defined by $\sigma^2=1/n_{photons}$. In our simulations the number of photons is set to $10000$ for all NGS.

Figure \ref{fig:RecoLayer3} shows the original atmosphere on layer 3 as well as its reconstruction based on data from 6 NGS. Visual inspection shows that at least some of the structures in the centre of the atmosphere have been captured, which is not true for structures close to the borders.

\begin{figure}[!ht]
\centering
  \includegraphics[width=6.8cm]{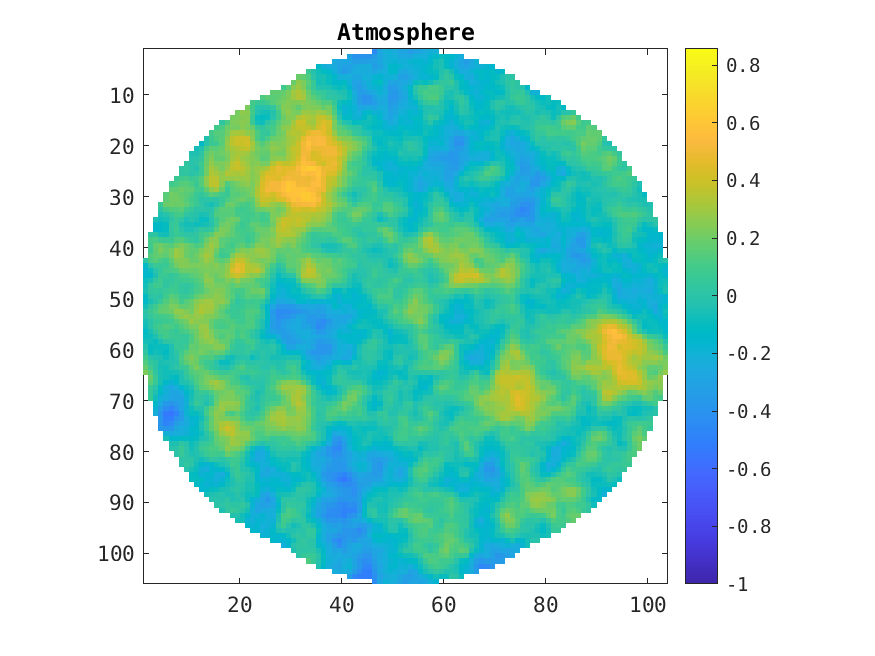}\hspace*{-0.8cm}
  \includegraphics[width=6.8cm]{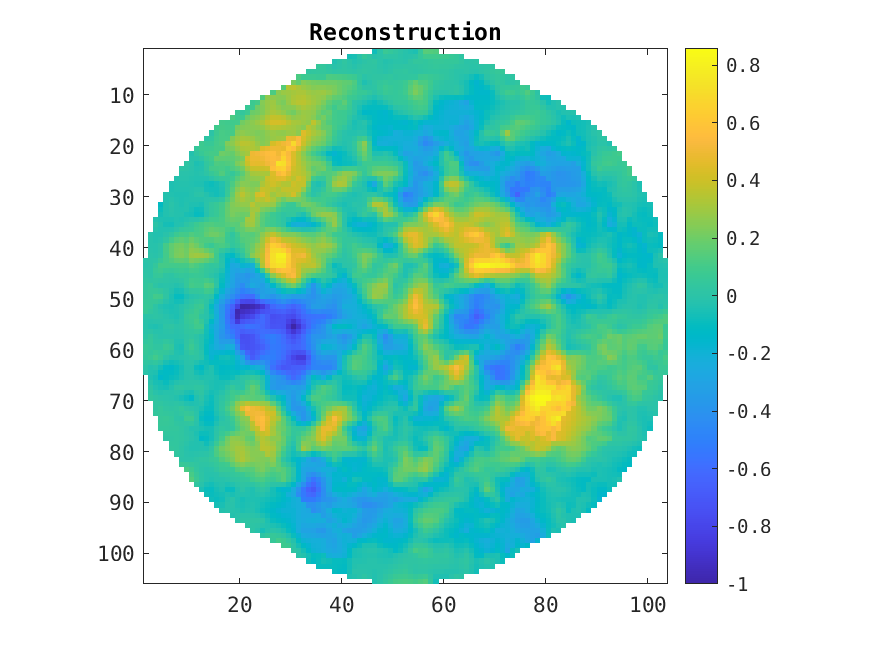}
  \caption{Original turbulence on layer 3 within $\Omega_3$ (left) and its reconstruction using 6 guide stars.}
  \label{fig:RecoLayer3}
\end{figure}

In Table \ref{table:Reconstruction_Results} we present the reconstruction results, i.e., the $L_2$ error and the $\text{SR}$, for 6 NGS. The relative $L_2$ error is calculated via
\begin{equation}
    \text{error} := \frac{\|{ \bPhi - \bPhi_{rec}}\|_2^2}{\|{\bPhi_{rec}}\|_2^2}.
\end{equation}
We observe that, although the reconstruction error is decreasing with increasing overlap, it still remains rather large. This is easily explained by the fact that the simulated atmosphere does not necessary belong to the orthogonal complement of the nullspace of the atmospheric tomography operator, whereas our reconstruction belongs to the orthogonal complement, see Remark \ref{rem:RecsPerp}. Therefore, the reconstruction error is bounded from below by the norm of its projection to the nullspace,
\[\|\bPhi_{rec}-\bPhi^\dagger\|\ge \|{\cal P_{N(\bA)}}\bPhi^\dagger\|.\]

\begin{table}[!ht]\centering
  \begin{tabular}{|c|c|c|c|c|c|c|}\hline
     &$\omega_l=1$&$\omega_l=2$& $\omega_l=3$&$\omega_l=4$&$\omega_l=5$&$\omega_l=6$\\
    \hline\hline
   $\text{error}$ & 11.72640 & 1.48710 & 1.06430 & 1.01750 & 0.99180 & 0.82200\\
   \hline
   SR & 0.00340 & 0.00640	& 0.01050 & 0.02400 & 0.03720	 & 0.40380\\ \hline
    SR variance & 0.00070 & 0.00160	& 0.00280 & 0.00670 & 0.01110	 & 0.17170	\\ \hline
  \end{tabular}
  \caption{$L_2$ error, Strehl Ratio (SR) and SR variance in directions with different numbers of overlaps on Layer 3.}
  \label{table:Reconstruction_Results}
\end{table}

\begin{figure}[H]
  \centering
  \includegraphics[width=9cm]{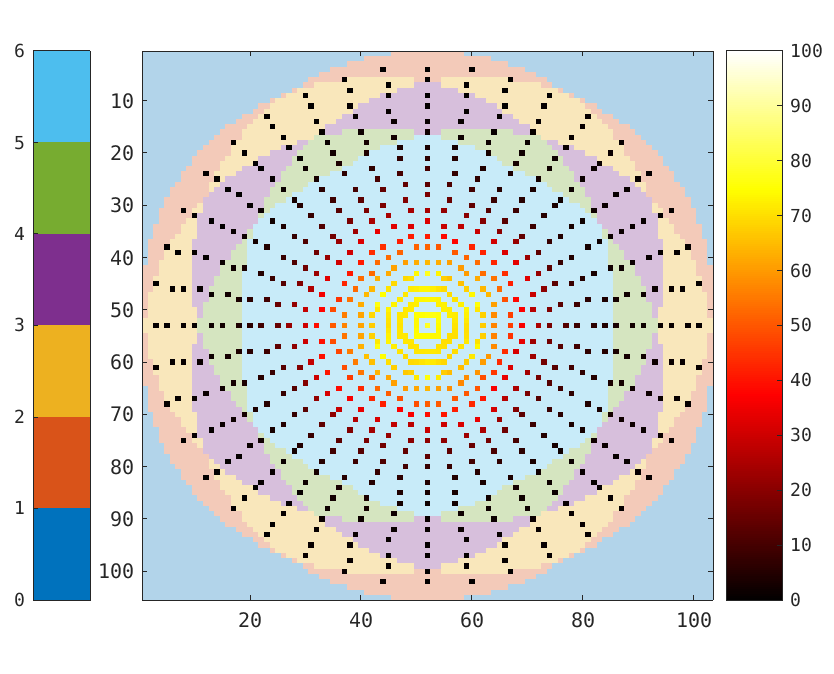}
  \caption{The area $\Omega_3$ on layer 3. The values of $\omega_3(x)\in \{1,\dots , 6\}$ are color coded, see colorbar on the left side of the picture. The dots mark the locations where the Strehl ratio is evaluated. The Strehl ration achieved at that location are alos color coded, see colorbar on the right side of the picture.}
  \label{fig:Strehlpoints}
\end{figure}
Although the reconstruction accuracy is limited, the achievable correction, given by the Strehl ratio, increases significantly and reaches a value of about $0.41$ on the third layer with maximal overlap. We also observe that there is quite a significant jump in the average Strehl Ratio from the area of 5 overlaps to the area of 6 overlaps. This is easily explained by looking at the achieved Strehl ratios within the area of $6$ overlaps: while we have in the center Strehl ratios of more than $80\%$, they drop significantly towards the outer borders of the area, to about $3\%$, see Figure \ref{fig:Strehlpoints}. Additionally, this also can be seen by looking at the variance in Strehl Ratio within areas of same overlaps (last line in Table \ref{table:Reconstruction_Results}): The variance for the area with $\omega_l=6$ is by a factor of at least 20 larger than the variances for the other areas. We get similar results with different numbers of guide stars. Note that the average Strehl ratio in the areas with maximal overlaps increases with the number of guide stars, as can be seen in Table \ref{table:Strehl_with_different_numberofguidestars}:

\begin{table}[!ht]\centering
  \begin{tabular}{|c|c|c|c|c|}\hline
     &3 GS& 6 GS &9 GS & 12 GS\\
    \hline\hline
    Average SR& 0.049 & 0.3209 & 0.3418& 0.3514\\
   \hline
   SR Layer 3 & 0.0624 & 0.40380 & 0.4374 & 0.4529\\ \hline
   Center SR & 0.1453 & 0.71210 & 0.7521 & 0.7632 \\ \hline
  \end{tabular}
  \caption{Strehl ratios in the areas of maximal overlaps for systems with different numbers of guide stars.}
  \label{table:Strehl_with_different_numberofguidestars}
\end{table}

In conclusion, we have seen that, although we are unable to reconstruct the correct atmosphere from the given guide star measurements, the achievable {\it correction} - given by the Strehl Ratio - is high and increases with the number of used guide stars.

Above we have stated that the the rather large reconstruction error for the turbulence in Table~\ref{table:Reconstruction_Results} can be explained by the fact that the simulated atmosphere does not necessarily belong to the orthogonal complement of the nullspace of the atmospheric tomography operator. We now validate this assertion via numerical simulations. To this end, we project the originally simulated atmosphere to the nullspace of the atmospheric tomography operator before reconstructing it. Using ${\cal N}(\bA)^\perp = {\cal R}(\bA^\ast)$, we create a projected atmosphere $\hat{\bPhi}$ by
\begin{equation}
    \hat{\bPhi} = \bA^*\bA \bPhi \in {\cal N}(\bA)^\perp.
\end{equation}
Figure~\ref{fig:ProjAtmo3} shows the projected turbulence and its reconstruction. One easily sees that the difference between the projected $\hat{\bPhi}$ here and the original atmosphere ${\bPhi}$ in Figure~\ref{fig:RecoLayer3} is large and that the projected one does not represent realistic atmospheric conditions. On the other hand, we observe that the reconstruction of the projected atmosphere is now much better than in the original setting in Figure~\ref{fig:RecoLayer3}. In regions with high overlaps we barely see any differences between the turbulence and its reconstruction, whereas for areas close to the border with a small number of overlaps the reconstruction is less good. This becomes evident when looking at the relative $L_2$ error in Table~\ref{table:Projected_Reconstruction_Results}. In general, the reconstruction error is much smaller compared to the original setting in  Table~\ref{table:Reconstruction_Results}. Further, the error is significantly decreasing for areas with a high number of overlaps.

\begin{figure}[H]
\centering
  \includegraphics[width=6.8cm]{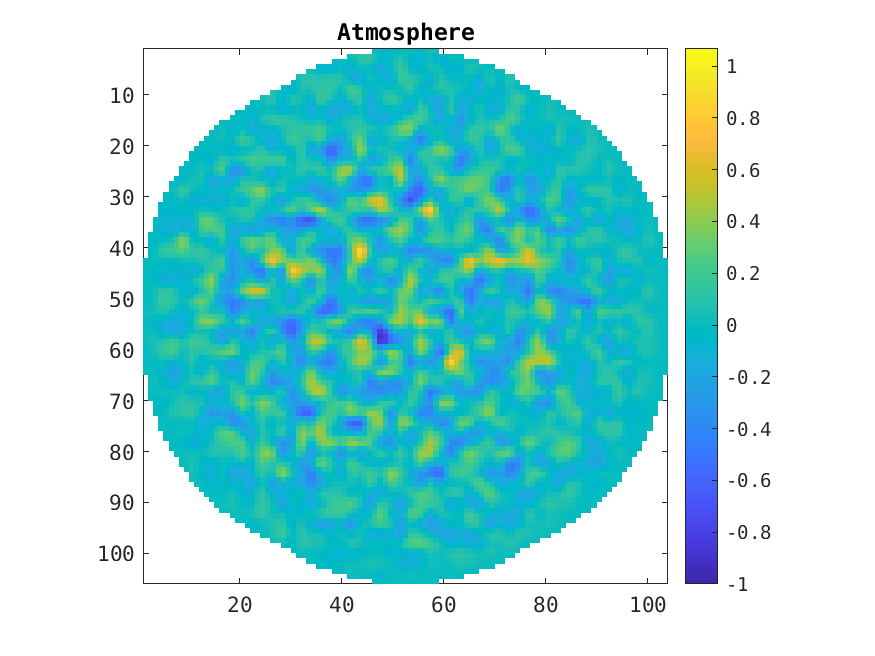}\hspace*{-1cm}
  \includegraphics[width=6.8cm]{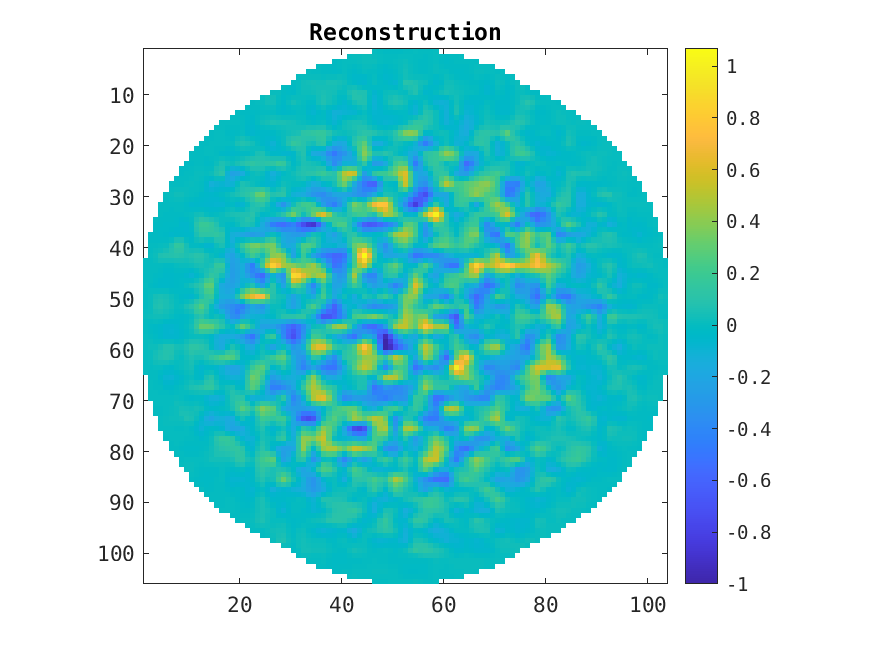}
  \caption{Projected turbulence on layer 3 within $\Omega_3$ (left) and its reconstruction with 6 guide stars.}
  \label{fig:ProjAtmo3}
\end{figure}

\begin{table}[h!]\centering
  \begin{tabular}{|c|c|c|c|c|c|c|}\hline
     &$\omega_l=1$&$\omega_l=2$& $\omega_l=3$&$\omega_l=4$&$\omega_l=5$&$\omega_l=6$\\
    \hline\hline
   $\text{error}$ & 9.90460 & 3.17020 & 1.23490 & 0.49410 & 0.30230 & 0.09400\\\hline
  \end{tabular}
  \caption{$L_2$ $error$ for areas with different numbers of overlaps.}
\label{table:Projected_Reconstruction_Results}
\end{table}

\section{Conclusion}
In the paper we analyzed the Atmospheric Tomography operator in particular with regard to its invertibility. It turns out that especially in non-overlapping areas of the layers there is not enough information in the data in order to achieve a correct reconstruction, which is also confirmed by the numerical simulations. Additionally, the numerical results indicate that in regions with many overlaps a fully correct reconstruction seems impossible. Nevertheless, the AO-correction achieved by the non-optimal reconstruction is still very good in terms of Strehl ratio, which can be explained by the fact that for the correction only the sum of the turbulence along lines is important.\\
The above results show that the nullspace of the Atmospheric Tomography is rather large, and that standard reconstruction/regularization schemes will almost never succeed in providing an optimal physical reconstruction. If in contrast the atmosphere would belong to the range of the Atmospheric Tomography operator, a reconstruction of the turbulence above the telescope would be possible, as demonstrated by numerical experiments.\\
The non-uniqueness issues are especially pronounced in the non-overlapping areas. This suggests in particular problems for tomography systems with large angular separation.

\bibliographystyle{siam}
\bibliography{Refs_Ramlau.bib}

\end{document}